\declaretheoremstyle[spaceabove = 3pt, spacebelow = 3pt, bodyfont = \itshape]{theorem}
\declaretheoremstyle[spaceabove = 3pt, spacebelow = 3pt]{remark}
\declaretheorem[style=theorem]{theorem}
\declaretheorem[style=theorem, sibling=theorem]{lemma}
\declaretheorem[style=theorem, sibling=theorem]{proposition}
\declaretheorem[style=remark, sibling=theorem]{remark}
\declaretheorem[style=theorem, numberwithin=section, title=Theorem]{alphatheorem}
\declaretheorem[style=theorem, sibling=alphatheorem, title=Conjecture]{alphaconjecture}
\crefname{alphatheorem}{Theorem}{Theorems}
\crefname{alphaconjecture}{Conjecture}{Conjectures}
\crefname{alphacorollary}{Corollary}{Corollaries}
\crefname{alphaproposition}{Proposition}{Propositions}
\mathchardef\mhyphen="2D
\newcommand\dash{\nobreakdash-\hspace{0pt}}
\let\oldbigwedge\bigwedge
\renewcommand\bigwedge{\oldbigwedge\nolimits}
\newcommand\field{\mathbf{k}}
\newcommand\GP{\mathrm{G}/\mathrm{P}}
\newcommand\tangent{\mathrm{T}}
\DeclareMathOperator\HH{H}
\DeclareMathOperator\OGr{OGr}
\DeclareMathOperator\SGr{SGr}
\title{Failure of Bott vanishing for (co)adjoint partial flag varieties}
\author{Pieter Belmans}
\begin{document}
\maketitle

\begin{abstract}
  Bott vanishing is a strong vanishing result for
  the cohomology of exterior powers of the cotangent bundle
  twisted by ample line bundles.
  Buch--Thomsen--Lauritzen--Mehta
  conjectured that partial flag varieties (which are not products of projective spaces)
  do not satisfy Bott vanishing,
  despite all their other nice properties.
  The cominuscule case is an easy application of the Borel--Weil--Bott theorem,
  following results of Snow.
  We show that
  the (co)adjoint partial flag varieties of all classical and exceptional Dynkin types
  also do not satisfy Bott vanishing,
  thus confirming the conjecture for this class of varieties.
\end{abstract}

\section{Introduction}
Let $X$ be a smooth projective variety over a base field~$\field$.
We say that $X$ satisfies \emph{Bott vanishing} if
\begin{equation}
  \label{equation:bott-vanishing}
  \HH^p(X,\Omega_X^q\otimes\mathcal{L})=0,
\end{equation}
for all integers~$q\geq 0$, $p\geq 1$,
and every ample line bundle~$\mathcal{L}$.
Working over~$\mathbb{C}$,
this sheaf cohomology group is the~$(q,p)$th Dolbeault cohomology of~$\mathcal{L}$.
The vanishing property \eqref{equation:bott-vanishing} is named after Bott,
who established it for~$\mathbb{P}^n$~\cite[Proposition~14.4]{MR0089473}.
It was subsequently generalised from~$\mathbb{P}^n$ to all smooth projective toric varieties,
where it is known as Bott--Danilov--Steenbrink vanishing,
see \cite[Theorem~2.4]{MR1916637} for a modern exposition.

In the \emph{opposite} direction,
i.e.,
concerning the \emph{failure} of Bott vanishing,
Buch--Thomsen--Lauritzen--Mehta
conjectured the following in \cite[Remark~2]{MR1464183},
for a different class of generalizations of~$\mathbb{P}^n$.
\begin{alphaconjecture}[Buch--Thomsen--Lauritzen--Mehta]
  \label{conjecture:btlm}
  Let~$X=\GP$ be a partial flag variety
  which is not isomorphic to a product of projective spaces.
  Then~$X$ does not satisfy Bott vanishing.
\end{alphaconjecture}
\vspace{-.5\baselineskip}
By applying the Borel--Weil--Bott theorem,
and the description of~$\Omega_X^q$ given by Snow \cite{MR0938025},
the failure of Bott vanishing
for (co)minuscule varieties
(which includes all type~A~Grassmannians~$\operatorname{Gr}(k,n)$ and quadric hypersurfaces)
is proven in~\cite[\S4.3]{MR1464183}.
This method however no longer applies in other settings,
because~$\Omega_X^1$ is no longer a completely reducible vector bundle.
Our main result is the following theorem,
confirming this conjectured failure of Bott vanishing for many more partial flag varieties.
\begin{alphatheorem}
  \label{theorem:main}
  Let~$X=\GP$ be an adjoint or coadjoint partial flag variety,
  over a field of characteristic~0,
  which is not isomorphic to~$\mathbb{P}^n$.
  Then~$X$ does not satisfy Bott vanishing.
\end{alphatheorem}
\vspace{-.5\baselineskip}
We prove \cref{theorem:main} for adjoint varieties in \cref{section:adjoint},
and for coadjoint varieties in \cref{section:coadjoint}.
The argument works on a type-per-type basis,
describing the smallest~$q\geq 1$
for which the vector
bundle~$\smash{\bigwedge^q\tangent_X\otimes\mathcal{O}_X(-1)\cong\Omega_X^{\dim
X-q}\otimes(\omega_X^\vee\otimes\mathcal{O}_X(-1))}$
admits a non-zero~$\HH^1$.
Fonarev establishes in \cite[Theorems~A and~B]{2506.09727},
the analogue of \cref{theorem:main}
for almost all generalised Grassmannians in types~B, C and~D which are not
(co)minuscule or (co)adjoint,
thus the only open cases of \cref{conjecture:btlm}
for maximal parabolic subgroups
are for certain cases in types~$\mathrm{E}_{6,7,8}$ and~$\mathrm{F}_4$,
and~$\mathrm{OGr}(n-1,2n+1)=\mathrm{B}_n/\mathrm{P}_{n-1}$.

\paragraph{Context}
If~$X$ is Fano,
then taking~$\mathcal{L}=\omega_X^\vee$
shows that~$\HH^1(X,\tangent_X)=\HH^1(X,\Omega_X^{\dim X-1}\otimes\omega_X^\vee)=0$,
so~$X$ is necessarily infinitesimally rigid.
This is a very strong constraint on Fano varieties satisfying Bott vanishing.
Recently,
the first non-toric rigid Fano varieties for which Bott vanishing holds were discovered:
\begin{itemize}[nosep]
  \item In dimension~2,
    the only non-toric infinitesimally rigid del Pezzo surface
    is the quintic del Pezzo surface.
    Totaro shows that it satisfies Bott vanishing~\cite[Theorem~2.1]{MR4082249}.
  \item In dimension~3,
    Totaro shows that
    there are precisely~19 non-toric~Fano~3-folds
    (amongst 105~deformation families)
    which satisfy Bott vanishing~\cite[Theorem~1.1]{MR4735363}.
  \item In arbitrary dimension,
    Torres shows that~$(\mathbb{P}^1)^n/\!/\mathrm{PGL}_2$
    satisfies Bott vanishing \cite{10.1307/mmj/20226298}.
\end{itemize}
Bott vanishing for non-rationally connected surfaces
is studied in \cite[\S3--\S6]{MR4735363}
and \cite{MR4363805}.

\medskip

\paragraph{Lifting Frobenius}
One important motivation for studying (the failure of) Bott vanishing,
is for its role as an obstruction to lifting varieties modulo~$p^2$
together with their Frobenius.
Deligne--Illusie \cite[Corollaire~2.11]{MR0894379}
showed that the existence of a lifting modulo~$p^2$
implies Kodaira--Akizuki--Nakano vanishing,
i.e., that~$\HH^p(X,\Omega_X^q\otimes\mathcal{L})=0$
whenever~$p+q>\dim X$,
and~$\mathcal{L}$ is an ample line bundle.
Buch--Thomsen--Lauritzen--Mehta \cite[Theorem~3]{MR1464183}
showed that including the Frobenius in the lifting
implies the (much stronger) Bott vanishing \eqref{equation:bott-vanishing}.
In \cite[\S4.2]{MR1464183} they gave examples of (co)minuscule partial flag varieties
where this obstruction does not vanish.
Using \cite[Corollary~1]{MR1464183}
this gives many partial flag varieties not admitting
a lifting modulo~$p^2$
together with their Frobenius, see \cite[Theorem~6]{MR1464183}.

Recently, Achinger--Witaszek--Zdanowicz
proved in \cite[Theorem~1]{MR4269423}
that \emph{all} partial flag varieties which are not isomorphic to
products of projective spaces
do not admit a lifting modulo~$p^2$
together with their Frobenius.
Their obstruction uses the geometry of rational curves
on Fano varieties with nef tangent bundle.
Combining \cite[Corollary~1]{MR1464183}
with \Cref{theorem:main} and \cite[Theorems~A and~B]{2506.09727}
gives as a corollary
a different proof of this non-liftability
for almost all partial flag varieties (only excluding some exceptional cases),
extending \cite[Theorem~6]{MR1464183}
using Bott non-vanishing as the obstruction.

\medskip

\paragraph{Hochschild--Kostant--Rosenberg decomposition}
In a different direction,
the Hochschild--Kostant--Rosenberg decomposition
expresses the Hochschild cohomology of~$X$
as
\begin{equation}
  \label{equation:hkr}
  \operatorname{HH}^i(X)
  \cong
  \bigoplus_{p+q=i}
  \HH^p(X,\bigwedge^q\tangent_X)
  \cong
  \bigoplus_{p+q=i}\HH^p(X,\Omega_X^{\dim X-q}\otimes\omega_X^\vee).
\end{equation}
Thus, Bott vanishing for Fano varieties gives a strong constraint on
the shape of the Hochschild--Kostant--Rosenberg decomposition:
only the global sections in \eqref{equation:hkr} are possibly non-zero.
This corollary of Bott vanishing
is established in \cite[Theorem~A]{MR4706032}
for (co)minuscule and (co)adjoint partial flag varieties.
In \cite{MR4578397} the decomposition \eqref{equation:hkr}
is computed for Fano 3-folds,
leading to a classification of Fano 3-folds having this vanishing property.

However, Belmans--Smirnov conjecture in \cite[Conjecture~F]{MR4706032}
that (co)minuscule and (co)adjoint partial flag varieties are
the \emph{only} partial flag varieties with this vanishing property for \eqref{equation:hkr}.
Hence, a positive answer to this conjecture implies
a positive answer to \cref{conjecture:btlm},
except (!) for (co)minuscule and (co)adjoint partial flag varieties.
Simultaneous with, and independent of this paper,
\cite[Conjecture~F]{MR4706032} has been established
by Fonarev for many generalised Grassmannians
in types~B, C, and D
which are not (co)minuscule and not (co)adjoint \cite[Theorems~A and~B]{2506.09727}.
This leaves open the failure of Bott vanishing for
(co)adjoint partial flag varieties,
and \cref{theorem:main} precisely establishes
the conjectured failure of Bott vanishing,
when it is not already provided by the non-vanishing of higher cohomology in \eqref{equation:hkr}.

For full flag varieties,
where~$\mathrm{P}=\mathrm{B}$ is a Borel subgroup,
some examples of the non-vanishing of higher cohomology in \eqref{equation:hkr}
(and thus failure of Bott vanishing)
are discussed in
\cite[Remark~2.2]{MR4057490}
and
\cite[Example~4.9(b)]{MR4923443}.

\paragraph{Conventions}
In what follows,
we work over an algebraically closed field~$\field$ of characteristic~0.
We denote by~$X$ the partial flag variety~$\GP$,
where~$\mathrm{G}$ is taken to be a simply connected and simple algebraic group
and~$\mathrm{P}\subseteq\mathrm{G}$ a parabolic subgroup.

\section{Adjoint case}
\label{section:adjoint}
First we consider the class of adjoint partial flag varieties.
These are partial flag varieties~$\GP$
where~$\mathrm{P}\subset\mathrm{G}$ is a specific parabolic subgroup.
An explicit geometric realization is obtained by taking
the~$\mathrm{G}$-dominant weight~$\lambda$ for the adjoint representation,
and considering the unique closed~$\mathrm{G}$-orbit
in~$\mathbb{P}(\mathrm{V}_{\smash{\mathrm{G}}}^{\lambda,\vee})$
attached to the lowest weight vector~$v_{-\lambda}$
of the~$\mathrm{G}$-representation~$\mathrm{V}_{\smash{\mathrm{G}}}^{\lambda,\vee}$.
The list of parabolic subgroups to consider
is obtained by considering the coefficients (in the basis of fundamental weights)
of the highest root of the root system of~$\mathrm{G}$.
The relevant geometric properties of adjoint partial flag varieties\footnote{
  We have omitted type~$\mathrm{C}_n$,
  because there is an exceptional isomorphism~$\SGr(1,2n)\cong\mathbb{P}^{2n-1}$,
  so we exclude it for the proof of \cref{theorem:main}.
}
are easily computed from the structure of the root system,
see, e.g., \cite[page~124]{MR1334091},
and are collected in \cref{table:adjoint},

\begin{table}[ht!]
  \centering
  \begin{tabular}{ccccc}
    \toprule
    type                            & variety                               & diagram                                   & dimension & index $\mathrm{i}_X$ \\
    \midrule
    $\mathrm{A}_n/\mathrm{P}_{1,n}$ & $\mathbb{P}(\tangent_{\mathbb{P}^n})$ & \dynkin[parabolic=9]{A}{}                 & $2n-1$    & $n$ \\
    $\mathrm{B}_n/\mathrm{P}_2$     & $\OGr(2,2n+1)$                        & \dynkin[parabolic=2]{B}{}                 & $4n-5$    & $2n-2$ \\
    $\mathrm{D}_n/\mathrm{P}_2$     & $\OGr(2,2n)$                          & \dynkin[parabolic=2]{D}{}                 & $4n-7$    & $2n-3$ \\
    $\mathrm{E}_6/\mathrm{P}_2$     &                                       & \dynkin[parabolic=2]{E}{6}                & $21$      & $11$ \\
    $\mathrm{E}_7/\mathrm{P}_1$     &                                       & \dynkin[parabolic=1]{E}{7}                & $33$      & $17$ \\
    $\mathrm{E}_8/\mathrm{P}_8$     &                                       & \dynkin[parabolic=128]{E}{8}              & $57$      & $29$ \\
    $\mathrm{F}_4/\mathrm{P}_1$     &                                       & \dynkin[parabolic=1]{F}{4}                & $15$      & $8$ \\
    $\mathrm{G}_2/\mathrm{P}_2$     & $\operatorname{G_2Gr}(2,7)$           & \dynkin[parabolic=2,reverse arrows]{G}{2} & $5$       & $3$ \\
    \bottomrule
  \end{tabular}
  \caption{Adjoint partial flag varieties}
  \label{table:adjoint}
\end{table}

To prove \cref{theorem:main},
we will use a filtration for the tangent bundle,
which is the dual of the lower central series for the nilradical~$\mathfrak{n}$
of the parabolic subalgebra~$\mathfrak{p}=\operatorname{Lie}\mathrm{P}$,
as~$\mathrm{T}_X$ is the equivariant vector bundle
induced by the~$\mathfrak{p}$-representation~$\mathfrak{g}/\mathfrak{p}$,
which is isomorphic to~$\mathfrak{n}^\vee$.
The origin and some properties of this filtration are described in \cite[\S3.1]{MR4706032}.
The main properties we will need are given by the following,
extending \cite[Lemma~39]{MR4706032}.
\begin{lemma}
  \label{lemma:tangent-bundle-adjoint}
  Let~$X=\GP$ be an adjoint partial flag variety,
  of dimension~$2r+1$.
  The tangent bundle $\mathrm{T}_X$ can be written as
  a non-split extension
  \begin{equation}
    \label{equation:T_X}
    0 \to
    \mathcal{E} \to
    \mathrm{T}_X \to
    \mathcal{O}_X(1) \to
    0
  \end{equation}
  of homogeneous vector bundles,
  where
  \begin{itemize}
    \item $\mathcal{E}$ is the irreducible $\mathrm{G}$-equivariant vector bundle
      of rank~$2r$ associated to the dual of the $\mathrm{L}$-representation
      $\mathfrak{n}^{\mathrm{ab}}=\mathfrak{n}/[\mathfrak{n},\mathfrak{n}]$,
      whose highest weight is given in \cref{table:E-weights},
      and
    \item $\mathcal{O}_X(1)$ is the ample line bundle
      attached to the dual of the
      one-dimensional~$\mathrm{L}$-representation~$[\mathfrak{n},\mathfrak{n}]$,
      which is also the ample line bundle
      for which~$\mathcal{O}_X(\mathrm{i}_X)\cong\omega_X^\vee$.
  \end{itemize}
\end{lemma}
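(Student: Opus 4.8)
The plan is to deduce \eqref{equation:T_X} and its asserted properties from the \emph{contact grading} of $\mathfrak{g}$ attached to the adjoint parabolic, together with the equivalence between $\mathrm{G}$-equivariant coherent sheaves on $\GP$ and finite-dimensional $\mathrm{P}$-representations; the details are carried out in \cite[Lemma~39]{MR4706032}, so I only outline the argument. Recall that for $X = \GP$ adjoint the nilradical $\mathfrak{n}$ of $\mathfrak{p} = \operatorname{Lie}\mathrm{P}$ has a two-step lower central series $\mathfrak{n} \supsetneq [\mathfrak{n},\mathfrak{n}] \supsetneq 0$ with $\dim[\mathfrak{n},\mathfrak{n}] = 1$; equivalently, $\mathfrak{g}$ carries a grading $\mathfrak{g} = \mathfrak{g}_{-2} \oplus \mathfrak{g}_{-1} \oplus \mathfrak{g}_{0} \oplus \mathfrak{g}_{1} \oplus \mathfrak{g}_{2}$ with $\mathfrak{p} = \mathfrak{g}_{\geq 0}$, $\mathfrak{n} = \mathfrak{g}_{\geq 1}$, $[\mathfrak{n},\mathfrak{n}] = \mathfrak{g}_{2}$ and $\dim \mathfrak{g}_{\pm 2} = 1$.

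First I would produce the extension on the level of $\mathrm{P}$-modules. As a vector space $\mathfrak{g}/\mathfrak{p}$ is $\mathfrak{g}_{-1} \oplus \mathfrak{g}_{-2}$, and $\mathfrak{g}_{-1}$ is a $\mathrm{P}$-submodule of it: it is $\mathrm{L}$-stable and $[\mathfrak{n},\mathfrak{g}_{-1}] \subseteq \mathfrak{g}_{\geq 0} = \mathfrak{p}$. This is precisely the filtration dual to the lower central series of $\mathfrak{n}$, with quotient $\mathfrak{g}_{-2}$. Applying the exact functor $\mathrm{G} \times^{\mathrm{P}} (-)$ to $0 \to \mathfrak{g}_{-1} \to \mathfrak{g}/\mathfrak{p} \to \mathfrak{g}_{-2} \to 0$ gives \eqref{equation:T_X}, with $\mathcal{E} = \mathrm{G} \times^{\mathrm{P}} \mathfrak{g}_{-1}$ and quotient line bundle $\mathrm{G} \times^{\mathrm{P}} \mathfrak{g}_{-2}$. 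Via the Killing form one has, as $\mathrm{L}$-modules, $\mathfrak{g}_{-1} \cong \mathfrak{g}_{1}^{\vee} = (\mathfrak{n}^{\mathrm{ab}})^{\vee}$ and $\mathfrak{g}_{-2} \cong \mathfrak{g}_{2}^{\vee} = [\mathfrak{n},\mathfrak{n}]^{\vee}$, which gives the stated descriptions. The rank of $\mathcal{E}$ is $\dim \mathfrak{g}_{-1}$, which is even because the Lie bracket is a nondegenerate alternating form $\mathfrak{g}_{-1} \times \mathfrak{g}_{-1} \to \mathfrak{g}_{-2} \cong \field$; a case-by-case inspection of the root data pins down the highest weight of $\mathfrak{g}_{-1}^{\vee}$ recorded in \cref{table:E-weights}. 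The line bundle $\mathrm{G} \times^{\mathrm{P}} [\mathfrak{n},\mathfrak{n}]^{\vee}$ is ample, and we write $\mathcal{O}_X(1)$ for it; the identity $\mathcal{O}_X(\mathrm{i}_X) \cong \omega_X^{\vee}$ with $\mathrm{i}_X$ the index from \cref{table:adjoint} follows from the standard formula for the canonical bundle of a partial flag variety applied to $\omega_X^{\vee} = \det \tangent_X = \det \mathcal{E} \otimes \mathcal{O}_X(1)$.

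It remains to show that \eqref{equation:T_X} is the unique non-split extension. Non-splitness is already visible at the level of $\mathrm{P}$-modules: an $\mathrm{L}$-stable line $M \subset \mathfrak{g}/\mathfrak{p}$ splitting off the quotient $\mathfrak{g}_{-2}$ would satisfy $[\mathfrak{g}_{1},M] \subseteq M \cap \mathfrak{g}_{-1} = 0$, contradicting $[\mathfrak{g}_{1},\mathfrak{g}_{-2}] \neq 0$ — and the latter holds because $\mathfrak{g}_{-2} \perp [\mathfrak{g}_{1},\mathfrak{g}_{1}] = [\mathfrak{n},\mathfrak{n}] = \mathfrak{g}_{2}$ would contradict nondegeneracy of the Killing form on $\mathfrak{g}_{-2} \times \mathfrak{g}_{2}$. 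Since $\mathrm{G}$ is reductive and $\operatorname{char} \field = 0$, the functor of $\mathrm{G}$-invariants is exact, so under $\operatorname{Ext}^{1}_{\mathrm{P}}(\mathfrak{g}_{-2},\mathfrak{g}_{-1}) = \operatorname{Ext}^{1}_{X}(\mathcal{O}_X(1),\mathcal{E})^{\mathrm{G}} \subseteq \operatorname{Ext}^{1}_{X}(\mathcal{O}_X(1),\mathcal{E}) = \HH^{1}(X, \mathcal{E} \otimes \mathcal{O}_X(-1))$ the class just exhibited remains non-zero; hence \eqref{equation:T_X} does not split even as a sequence of plain vector bundles. Uniqueness up to isomorphism is obtained from the Borel--Weil--Bott computation of $\HH^{1}(X, \mathcal{E} \otimes \mathcal{O}_X(-1))$, carried out type by type: decompose $\mathcal{E} \otimes \mathcal{O}_X(-1)$ into irreducible homogeneous bundles and run Bott's algorithm on each weight. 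I expect this last cohomological bookkeeping — which needs the weights of \cref{table:E-weights} and is the one point where $\operatorname{char} \field = 0$ is genuinely used — to be the only laborious step; everything before it is formal.
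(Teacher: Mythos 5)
The paper does not prove this lemma itself but recalls it from \cite[Lemma~39]{MR4706032}, and your outline via the contact grading of $\mathfrak{g}$ (equivalently, the dual of the lower central series of the nilradical $\mathfrak{n}$, which is exactly the filtration the paper points to at the start of \cref{section:adjoint}) is the same argument as in that source. The sketch is correct; the one substantive point you defer --- that $\HH^1(X,\mathcal{E}\otimes\mathcal{O}_X(-1))$ is one\dash dimensional, which is what upgrades ``non-split'' to ``the unique non-split extension'' --- is precisely the type-by-type Borel--Weil--Bott computation carried out in the cited lemma.
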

We summarise its proof from \cite{MR4706032},
adding a direct description of the bundle~$\mathcal{E}$.
\begin{proof}
  By the weight computation in \cite[Lemma~28]{MR4706032}
  the nilradical~$\mathfrak{n}$ is a Heisenberg Lie algebra,
  which implies that its lower central series
  is of the form
  \begin{equation}
    \label{equation:lower-central-series}
    0\to
    [\mathfrak{n},\mathfrak{n}]\to
    \mathfrak{n}\to
    \mathfrak{n}/[\mathfrak{n},\mathfrak{n}]=\mathfrak{n}^{\mathrm{ab}}\to
    0,
  \end{equation}
  where~$\dim_\field[\mathfrak{n},\mathfrak{n}]=1$.
  The same weight computation shows that~$[\mathfrak{n},\mathfrak{n}]$
  is in fact the weight space~$\mathfrak{g}_{-\Theta}$,
  where~$\Theta$ is the highest root of~$\mathfrak{g}$.
  The sequence \eqref{equation:lower-central-series}
  induces after dualizing
  (as it is~$\mathfrak{n}^\vee=\mathfrak{g}/\mathfrak{p}$ which gives the tangent bundle)
  a sequence of the form \eqref{equation:T_X},
  with the line bundle associated to~$[\mathfrak{n},\mathfrak{n}]^\vee$
  corresponding to the projective realization of the adjoint variety,
  which is the generator~$\mathcal{O}_X(1)$ of the Picard group,
  as we have excluded type~C.

  To identify the bundle~$\mathcal{E}$,
  note that the action of~$\mathfrak{n}$ on~$\mathfrak{n}^{\mathrm{ab}}$ is trivial,
  so that the associated vector bundle~$\mathcal{E}$ is completely reducible.
  The highest weights which characterize~$\mathcal{E}$
  can be deduced from Kostant's theorem
  computing Lie algebra cohomology of the nilradical \cite[Corollary~8.2]{MR142696}.
  Note that \cite[Corollary~8.2]{MR142696} gives the lowest weights.

  We are interested
  in~$\HH_{\mathrm{CE}}^1(\mathfrak{n},\field)=\mathfrak{n}^{\mathrm{ab},\vee}$,
  which is given by a sum over coset representatives of length~1.
  By maximality of the parabolic subgroup,
  there is a \emph{unique} minimal length coset representative to consider
  (hence~$\mathcal{E}$ is irreducible),
  and it is the simple reflection~$\mathrm{s}_k$,
  so that the representation we are interested in has lowest weight
  \begin{equation}
    \mathrm{s}_k\cdot 0
    =
    \mathrm{s}_k(0+\rho)-\rho
    =
    \rho-\langle\rho,\alpha_k^\vee\rangle\alpha_k-\rho
    =
    -\alpha_k.
  \end{equation}
  Translating this to highest weights
  (or geometrically, dualizing the associated vector bundle, as in
  \cite[Lemma~2.1(1)]{2107.07814v3})
  we obtain
  \begin{equation}
    -w_0^{\mathrm{L}}(-\alpha_k)
    =
    -\alpha_k+\Theta.
  \end{equation}
  Finally, to express~$-\alpha_k+\Theta$ in terms of fundamental weights,
  one considers the~$k$th row of the Cartan matrix
  to know~$\alpha_k$,
  and an expression for the highest root~$\Theta$ in terms of fundamental weights
  can be looked up, see, e.g., \cite{MR240238},
  giving the highest weight in \cref{table:E-weights}.
\end{proof}

Consequently,
for~$q\geq 1$
we have the following short exact sequence
\begin{equation}
  \label{equation:exterior-power-T_X}
  0 \to
  \bigwedge^q \mathcal{E} \to
  \bigwedge^q \mathrm{T}_X \to
  \bigwedge^{q-1} \mathcal{E} \otimes\mathcal{O}_X(1) \to
  0
\end{equation}
of homogeneous vector bundles.
It is shown in \cite[Proposition~29]{MR4706032}
that for all~$q=0,\ldots,\dim X$
the bundle~$\bigwedge^q\mathrm{T}_X\cong\Omega_X^{\dim X-q}\otimes\omega_X^\vee$
has no higher cohomology,
which is why we have to consider different ample twists of~$\Omega_X^{\dim X-q}$
to show that Bott vanishing fails.

\begin{table}
  \centering
  \begin{tabular}{cc}
    \toprule
    type & $\mathcal{E}$ \\
    \midrule
    $\mathrm{B}_n$ & $\begin{cases} \mathcal{U}^{\omega_1-\omega_2+2\omega_3} & n=3 \\ \mathcal{U}^{\omega_1-\omega_2+\omega_3} & n\geq 4 \end{cases}$ \\
    $\mathrm{D}_n$ & $\begin{cases} \mathcal{U}^{\omega_1-\omega_2+\omega_3+\omega_4} & n=4 \\ \mathcal{U}^{\omega_1-\omega_2+\omega_3} & n\geq 5 \end{cases}$ \\
    $\mathrm{E}_6$ & $\mathcal{U}^{-\omega_2+\omega_4}$ \\
    $\mathrm{E}_7$ & $\mathcal{U}^{-\omega_1+\omega_3}$ \\
    $\mathrm{E}_8$ & $\mathcal{U}^{\omega_7-\omega_8}$ \\
    $\mathrm{F}_4$ & $\mathcal{U}^{-\omega_1+\omega_2}$ \\
    $\mathrm{G}_2$ & $\mathcal{U}^{3\omega_1-\omega_2}$ \\
    \bottomrule
  \end{tabular}
  \caption{$\mathcal{E}$ as an irreducible vector bundle}
  \label{table:E-weights}
\end{table}

We will repeatedly apply the following standard lemma.
\begin{lemma}
  \label{lemma:homogeneous-spectral-sequence}
  Let~$X=\GP$ be a partial flag variety.
  Let~$\mathcal{U}$ be a homogeneous vector bundle,
  and let~$\mathcal{F}^\bullet$ be the Jordan--H\"older filtration
  in the category of homogeneous vector bundles,
  with associated graded pieces~$\mathcal{G}^\bullet$,
  which are completely reducible homogeneous vector bundles.
  There is a spectral sequence
  \begin{equation}
    \mathrm{E}_1^{i,q-i}
    \colonequals
    \HH^q(X,\mathcal{G}^i)
    \Rightarrow
    \HH^q(X,\mathcal{U}),
  \end{equation}
  of~$\mathrm{G}$-representations.
  In particular,
  if a~$\mathrm{G}$-representation~$V$ appears in a single cohomological degree
  on the~$\mathrm{E}_1$-page
  then~$V$ also appears on the~$\mathrm{E}_\infty$-page.
\end{lemma}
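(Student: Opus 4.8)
This is a standard piece of equivariant homological algebra, so the plan is to assemble it from familiar ingredients. First I would produce the spectral sequence as the spectral sequence of a filtered object. Write the Jordan--H\"older filtration as a finite chain $0=\mathcal{F}^{N+1}\subseteq\mathcal{F}^{N}\subseteq\cdots\subseteq\mathcal{F}^{0}=\mathcal{U}$ of homogeneous subbundles with $\mathcal{G}^{i}=\mathcal{F}^{i}/\mathcal{F}^{i+1}$ completely reducible, apply $\HH^{\bullet}(X,-)$ to the short exact sequences $0\to\mathcal{F}^{i+1}\to\mathcal{F}^{i}\to\mathcal{G}^{i}\to0$, and splice the resulting long exact sequences into an exact couple. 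Its derived spectral sequence has first page $\HH^{q}(X,\mathcal{G}^{i})$, placed so that the two superscripts of $\mathrm{E}_1^{i,q-i}$ add up to the cohomological degree $q$, and abuts to $\HH^{q}(X,\mathcal{U})$ with the filtration induced by the images of $\HH^{q}(X,\mathcal{F}^{i})$. Since $X=\GP$ is projective the cohomology groups are finite-dimensional, and since the filtration is finite the spectral sequence converges after finitely many pages.

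Next I would upgrade this to a spectral sequence of $\mathrm{G}$-representations. The point is that $X$ is a $\mathrm{G}$-variety, that every sheaf and every map above is $\mathrm{G}$-equivariant (the Jordan--H\"older filtration being taken in the category of homogeneous bundles), and that the cohomology of a $\mathrm{G}$-linearized sheaf on $X$ carries a natural $\mathrm{G}$-action which is functorial in the sheaf and compatible with connecting homomorphisms; this can be seen by computing with a resolution by $\mathrm{G}$-linearized injective sheaves, which exist on the quasi-projective $\mathrm{G}$-variety $X$. Hence the long exact sequences, the exact couple, and the whole spectral sequence live in the category of finite-dimensional $\mathrm{G}$-representations.

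For the final assertion I would use semisimplicity. Under the standing hypotheses ($\field$ of characteristic zero, $\mathrm{G}$ reductive) every finite-dimensional $\mathrm{G}$-representation is completely reducible. Fix an irreducible $V$ occurring among the constituents of $\bigoplus_{i}\mathrm{E}_1^{i,q_0-i}=\bigoplus_{i}\HH^{q_0}(X,\mathcal{G}^{i})$ but of no $\bigoplus_{i}\HH^{q}(X,\mathcal{G}^{i})$ with $q\neq q_0$. Each $\mathrm{E}_r^{i,j}$ is an iterated subquotient of $\mathrm{E}_1^{i,j}$, so $V$ is not a constituent of $\mathrm{E}_r^{i,j}$ once $i+j\neq q_0$, for all $r\geq1$. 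Every differential $d_r$ raises the cohomological degree $i+j$ by one, so both the source and the target of any $d_r$ meeting a term of cohomological degree $q_0$ sit in cohomological degree $q_0\pm1$ and therefore contain no copy of $V$; by complete reducibility the $V$-isotypic part of $\bigoplus_{i}\mathrm{E}_r^{i,q_0-i}$ is then neither hit nor killed, so its dimension does not depend on $r$. Being positive on $\mathrm{E}_1$ it is positive on $\mathrm{E}_\infty$, and since $\bigoplus_{i}\mathrm{E}_\infty^{i,q_0-i}$ is the associated graded of a finite filtration of $\HH^{q_0}(X,\mathcal{U})$, the two have the same constituents, so $V$ occurs in $\HH^{q_0}(X,\mathcal{U})$.

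I do not expect a real obstacle, since the lemma is standard; the step that deserves the most care is the second one, namely checking that the cohomology of $\mathrm{G}$-linearized sheaves together with the connecting maps of long exact sequences is functorial \emph{as a functor to $\mathrm{G}$-representations}, so that the exact-couple construction runs $\mathrm{G}$-equivariantly. Everything else is the formalism of the spectral sequence of a filtered complex combined with Schur's lemma.
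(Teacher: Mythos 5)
Your proof is correct and is exactly the standard argument the paper has in mind: the paper states this lemma without proof (explicitly calling it standard), and your exact-couple construction, the equivariance of sheaf cohomology of $\mathrm{G}$-linearized sheaves, and the Schur-lemma/semisimplicity argument for the survival of an isotypic component concentrated in a single total degree supply precisely the missing details.
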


To compute the cohomology of~$\mathcal{G}^i$
in \cref{lemma:homogeneous-spectral-sequence}
one uses the Borel--Weil--Bott theorem,
see, e.g., \cite[Theorem~5.0.1]{MR1038279}.
For the reader's sake
we state it in the shape we use for later computations.
Recall that a weight~$\lambda\in\mathrm{X}(\mathrm{T})_{\mathrm{G}}$
is regular (resp.~singular)
if for every root~$\alpha\in\mathrm{R}(\mathrm{G})$ we have~$(\lambda,\alpha)\neq0$
(resp.~there exists a root~$\alpha$ for which~$(\lambda,\alpha)=0$),
where~$(-,-)$ denotes the unique Weyl-invariant inner product
on~$\mathrm{X}(\mathrm{T})_{\mathrm{G}}$,
for~$\mathrm{T}\subset\mathrm{G}$ the fixed maximal torus.
\begin{theorem}[Borel--Weil--Bott]
  \label{theorem:borel-weil-bott}
  Let~$\mathrm{G}$ be a simply connected simple algebraic group,
  and let~$\mathrm{P}\subset\mathrm{G}$ be a parabolic subgroup.
  Let~$\mathcal{U}^\lambda$ be
  the~$\mathrm{G}$-equivariant vector bundle on~$\GP$,
  associated to the irreducible representation of the Levi subgroup~$\mathrm{L}\subset\mathrm{P}$
  with highest weight~$\lambda\in\mathrm{X}(\mathrm{T})_{\mathrm{L}}^+$.
  Then precisely one of the following holds:
  \begin{enumerate}
    \item if~$\lambda+\rho$ is a singular weight,
      then
      \begin{equation}
        \HH^\bullet(\GP,\mathcal{U}^\lambda)=0;
      \end{equation}
    \item if~$\lambda+\rho$ is a regular weight,
      then there exists a unique Weyl group element~$w\in\mathrm{W}_{\mathrm{G}}$
      such that~$w(\lambda+\rho)$
      is dominant,
      and
      \begin{equation}
        \HH^\bullet(\GP,\mathcal{U}^\lambda)
        \cong
        \mathrm{V}_{\mathrm{G}}^{w(\lambda+\rho)-\rho}[-\ell(w)].
      \end{equation}
  \end{enumerate}
\end{theorem}

\paragraph{Exterior powers}
The computation of the highest weights of exterior powers
of irreducible vector bundles
is a standard procedure,
which is for instance described in \cite[\S2.3]{2107.07814v3}.
We briefly recall this setup,
as we will need it for the computations.

The computation in the case of a maximal parabolic subgroup~$\mathrm{P}=\mathrm{P}_k$
decomposes into
\begin{itemize}
  \item a computation
    in the derived subgroup~$\mathrm{L}'$ of the Levi
    subgroup~$\mathrm{L}\subset\mathrm{P}$,
    which is simply connected semisimple
    and determined by omitting the vertex~$k$
    from the Dynkin diagram for~$\mathrm{G}$,
    and
  \item the one-dimensional torus~$\mathrm{Z}(\mathrm{L})$,
\end{itemize}
as there exists a surjective morphism of
algebraic groups~$\mathrm{L}'\times\mathrm{Z}(\mathrm{L})\to\mathrm{L}$.
On the level of weight lattices,
recall that
the fundamental weights of~$\mathrm{L}'$
are the fundamental weights of~$\mathrm{L}$
(and thus the fundamental weights of~$\mathrm{G}$)
excluding the fundamental weight~$\omega_k$ associated to the marked vertex,
and that there is an obvious lifting morphism in the opposite direction.
If~$\lambda$ is an element of the weight lattice of~$\mathrm{L}$,
we will write~$\lambda'$ for the induced weight of~$\mathrm{L}'$,
and vice versa,
using the lifting.

We also have a restriction morphism from the weight lattice of~$\mathrm{L}$
to that of~$\mathrm{Z}(\mathrm{L})$;
and working with rational coefficients,
the weight~$\lambda$ is sent to~$r_\lambda\colonequals(\lambda,\omega_k)/(\omega_k,\omega_k)$.
With this notation we have the following \cite[Lemma~2.4(2)]{2107.07814v3}.
\begin{lemma}
  \label{lemma:exterior-power}
  Let~$\mathrm{V}_{\mathrm{L}}^\lambda$
  be the irreducible~$\mathrm{L}$-representation
  of highest weight~$\lambda$.
  Let~$\mathrm{V}_{\mathrm{L}'}^{\lambda'}$
  be the induced irreducible representation of~$\mathrm{L}'$.
  If
  \begin{equation}
    \label{equation:exterior-power-restricted}
    \bigwedge^q\mathrm{V}_{\mathrm{L}'}^{\lambda'}
    =
    \bigoplus_{\nu'\in\Sigma}(\mathrm{V}_{\mathrm{L}'}^{\nu'})^{\oplus m(\lambda',\nu')}
  \end{equation}
  for some indexing set of weights~$\Sigma$
  and multiplicities~$m(\lambda',\nu')$,
  then
  \begin{equation}
    \label{equation:exterior-power-lifted}
    \bigwedge^q\mathrm{V}_{\mathrm{L}}^\lambda
    =
    \bigoplus_{\nu\in\Sigma}
    (\mathrm{V}_{\mathrm{L}}^{\nu+(qr_\lambda-r_\nu)\omega_k})^{\oplus m(\lambda',\nu')}.
  \end{equation}
  This implies that
  \begin{equation}
    \bigwedge^q\mathcal{U}^\lambda
    \cong
    \bigoplus_{\nu\in\Sigma}
    (\mathcal{U}^{\nu+(qr_\lambda-r_\nu)\omega_k})^{\oplus m(\lambda',\nu')}.
  \end{equation}
\end{lemma}
This computation can be implemented on a computer \cite{bott-non-vanishing}:
because~$\mathrm{L}'$ is semisimple,
the exterior power \eqref{equation:exterior-power-restricted}
can be computed using the Weyl character ring implementation in \cite{sagemath}.
The result in \eqref{equation:exterior-power-lifted}
is thus obtained by
lifting the semisimple computation for~$\mathrm{L}'$,
and adding~$(qr_\lambda-r_\nu)\omega_k$ to it.

In the exceptional types~$\mathrm{E}_{6,7,8}$, $\mathrm{F}_4$ and~$\mathrm{G}_2$,
computing the cohomology of the appropriate twisted exterior power of~$\mathcal{E}$
is all we have to do.

However, in type~$\mathrm{B}_n$ (resp.~$\mathrm{D}_n$)
we will need a computation
of the second and third exterior power of the vector bundle~$\mathcal{E}$
from \cref{table:E-weights},
which we require to be (almost) \emph{uniform} in~$n$,
so that we can treat all adjoint orthogonal Grassmannians simultaneously.
Observe that
the computation of the right-hand side in
\eqref{equation:exterior-power-restricted} is reduced,
via a plethysm,
to a computation in types~$\mathrm{A}_1$ and~$\mathrm{B}_{n-2}$ (resp.~$\mathrm{D}_{n-2}$),
because of the shape of the marked diagram in \cref{table:adjoint}.
From \cref{table:E-weights}
we see that the highest weight on the factor~$\mathrm{B}_{n-2}$ (resp.~$\mathrm{D}_{n-2}$)
is~$\omega_1'$
(where~$\omega_1',\ldots,\omega_{n-2}'$ are the fundamental weights of this factor).
This allows us to read off the computation of the components in the plethysm
for~$\mathrm{B}_{n-2}$ and~$\mathrm{D}_{n-2}$
from \cite[Table~5]{MR1064110}.

The main observation from the formulas in \cite[Table~5]{MR1064110}
is that, for~$n$ sufficiently large,
the only fundamental weights appearing
in the computation for type~$\mathrm{B}_{n-2}$ (resp.~$\mathrm{D}_{n-2}$)
will be~$\omega_1',\omega_2',\omega_3'$
(resp.~$\omega_1',\omega_2',\omega_3',\omega_4'$)
and the coefficients will be independent of~$n$
as soon as~$n\geq 6$ (resp.~$n\geq 7$),
because then the computation will only involve roots of the same length.
This allows us to consider only finitely many cases.
Likewise,
the computation of the coefficient of~$\omega_2$
in \cref{lemma:exterior-power},
the fundamental weight corresponding to the marked vertex in types~B and~D,
is also completely determined as soon as we have done
the computation for~$n=6$ (resp.~$n=7$).

We phrase this observation as the following lemma.
\begin{lemma}
  \label{lemma:only-first-coefficients-matter}
  Let~$\mathcal{U}^\lambda$
  be an irreducible summand of~$\bigwedge^2\mathcal{E}$ or~$\bigwedge^3\mathcal{E}$,
  where~$\mathcal{E}$ is the bundle from \cref{table:E-weights},
  in type~$\mathrm{B}_n$ (resp.~$\mathrm{D}_n$).
  Then the coefficients of~$\lambda$
  in terms of the fundamental weights
  are possibly non-zero only for~$\omega_1,\ldots,\omega_5$,
  and they are independent of~$n$ for~$n\geq 6$ (resp.~$n\geq 7$).
\end{lemma}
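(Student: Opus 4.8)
The plan is to exploit, as the discussion preceding the statement already indicates, the special shape of the marked diagrams in~\cref{table:adjoint}: deleting the marked node~$2$ leaves a diagram of type~$\mathrm{A}_1\times\mathrm{B}_{n-2}$ (resp.\ $\mathrm{A}_1\times\mathrm{D}_{n-2}$), so the semisimple part~$\mathrm{L}'$ of the Levi factor is a product of an~$\mathrm{SL}_2$ and an orthogonal group, and the~$\mathrm{L}'$\dash representation underlying~$\mathcal{E}$ is, by~\cref{table:E-weights}, the outer tensor product~$\mathrm{V}\otimes\mathrm{W}$, where~$\mathrm{V}$ is the standard~$2$\dash dimensional~$\mathrm{SL}_2$\dash representation (the weight~$\omega_1$ of~$\mathrm{G}$ restricts to the fundamental weight of the~$\mathrm{A}_1$\dash factor) and~$\mathrm{W}$ is the vector representation of the orthogonal factor (the weight~$\omega_3$ restricts to its first fundamental weight~$\omega_1'$). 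Since the orthogonal factor occupies the nodes~$3,\ldots,n$, its fundamental weights~$\omega_1',\omega_2',\omega_3'$ are~$\omega_3,\omega_4,\omega_5$ of~$\mathrm{G}$.

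First I would expand, via the dual Cauchy decomposition for an exterior power of a tensor product,
\[
  \bigwedge^q(\mathrm{V}\otimes\mathrm{W})
  \;\cong\;
  \bigoplus_{\mu\vdash q}\mathbb{S}_\mu\mathrm{V}\otimes\mathbb{S}_{\mu^{\mathrm{t}}}\mathrm{W},
\]
with~$\mu^{\mathrm{t}}$ the conjugate partition. For~$q\in\{2,3\}$ this has at most two, resp.\ three summands, and since~$\dim\mathrm{V}=2$ every surviving factor~$\mathbb{S}_\mu\mathrm{V}$ is an irreducible~$\mathrm{SL}_2$\dash representation of highest weight~$(\mu_1-\mu_2)\omega_1$ with~$\mu_1-\mu_2\le q\le 3$; hence the~$\omega_1$\dash coefficient of any irreducible summand of~$\bigwedge^2\mathcal{E}$ or~$\bigwedge^3\mathcal{E}$ is at most~$3$, and is obviously independent of~$n$. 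Then I would record the decompositions of the orthogonal plethysms of~$\mathrm{W}$ that occur:
\[
\begin{gathered}
  \operatorname{Sym}^2\mathrm{W}=\mathrm{V}^{2\omega_1'}\oplus\mathbf{1},\qquad
  \bigwedge^2\mathrm{W}=\mathrm{V}^{\omega_2'},\qquad
  \operatorname{Sym}^3\mathrm{W}=\mathrm{V}^{3\omega_1'}\oplus\mathrm{V}^{\omega_1'},\\
  \mathbb{S}_{(2,1)}\mathrm{W}=\mathrm{V}^{\omega_1'+\omega_2'}\oplus\mathrm{V}^{\omega_1'},\qquad
  \bigwedge^3\mathrm{W}=\mathrm{V}^{\omega_3'},
\end{gathered}
\]
where~$\mathrm{V}^{\mu}$ denotes the irreducible of the orthogonal factor of highest weight~$\mu$ and~$\mathbf{1}$ its trivial representation. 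The last identity is the one sensitive to the rank: $\bigwedge^3\mathrm{W}$ acquires an extra (half\dash)spin constituent once the orthogonal factor has rank~$3$ in type~$\mathrm{B}$, resp.\ rank~$4$ in type~$\mathrm{D}$, which is precisely why~$n=5$ in type~$\mathrm{B}$ and~$n\le 6$ in type~$\mathrm{D}$ are excluded, leaving the thresholds~$n\ge 6$ and~$n\ge 7$. In these ranges Littlewood's restriction rule~\cite[Table~5]{MR1064110} involves no modification terms, so the displayed decompositions and their multiplicities are literally independent of~$n$, and translating~$\omega_1',\omega_2',\omega_3'$ to~$\omega_3,\omega_4,\omega_5$ shows that, up to the coefficient at the marked node, every irreducible summand of~$\bigwedge^2\mathcal{E}$ or~$\bigwedge^3\mathcal{E}$ has highest weight supported on~$\{\omega_1,\omega_3,\omega_4,\omega_5\}$ with~$n$\dash independent coefficients.

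Finally I would control the coefficient of~$\omega_2$ using~\cref{lemma:exterior-power}: the summand of~$\bigwedge^q\mathcal{E}$ attached to an~$\mathrm{L}'$\dash constituent~$\nu'$ has highest weight~$\nu+(qr_\lambda-r_\nu)\omega_2$, where~$\lambda=\omega_1-\omega_2+\omega_3$ (uniform for~$n$ in the relevant range) and~$\nu$ is the lift of~$\nu'$. Normalising the invariant form so that long roots have squared length~$2$, one computes~$(\omega_2,\omega_2)=2$ and~$(\omega_i,\omega_2)=\min(i,2)$ for~$i\in\{1,3,4,5\}$, values that are independent of~$n$ exactly in the ranges above, since the marked node~$2$ is then not adjacent to the short, resp.\ branch, end of the diagram. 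Consequently~$qr_\lambda-r_\nu$ is a fixed~$\mathbb{Z}$\dash linear expression in the (already~$n$\dash independent) coefficients of~$\nu$, hence itself independent of~$n$; running the whole computation once at~$n=6$, resp.\ $n=7$ (for instance with the implementation of~\cite{bott-non-vanishing}) then pins down the coefficients and confirms that only~$\omega_1,\ldots,\omega_5$ can occur. The one place where I expect genuine care to be needed, rather than routine bookkeeping, is this boundary check: verifying that for the orthogonal factor of the smallest admissible rank the plethysm~$\bigwedge^3\mathrm{W}$ is still given by the stable formula above (no spurious spin or Littlewood\dash modification summands), and that the value of~$qr_\lambda-r_\nu$ at~$n=6$, resp.\ $n=7$, already agrees with its stable value.
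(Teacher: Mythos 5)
Your proposal is correct and follows essentially the same route as the paper, which justifies the lemma by the preceding discussion: reduce via the shape of the marked diagram to a plethysm of the standard representations of the $\mathrm{A}_1\times\mathrm{B}_{n-2}$ (resp.\ $\mathrm{A}_1\times\mathrm{D}_{n-2}$) factors, read off the orthogonal constituents from Littlewood's tables \cite{MR1064110}, observe that stability fails only when $\omega_3'$ hits a spin node (giving the thresholds $n\geq 6$, resp.\ $n\geq 7$), and control the $\omega_2$\dash coefficient via \cref{lemma:exterior-power}. You merely make explicit the dual Cauchy decomposition and the individual plethysms that the paper delegates to the cited table and to \cite{bott-non-vanishing}.
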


\paragraph{Dynkin type \texorpdfstring{$\mathrm{A}_n$}{A}}
The adjoint partial variety of type~$\mathrm{A}_n$,
or incidence variety,
is not a generalised Grassmannian,
but rather has Picard rank~2.
It is isomorphic to~$\mathbb{P}(\mathrm{T}_{\mathbb{P}^n})$,
and can thus be described as a degree-$(1,1)$ hypersurface
in~$\mathbb{P}^n\times\mathbb{P}^n$.
Using this description,
the following is already shown in \cite[\S4.2]{MR1464183}.
\begin{proposition}
  \label{proposition:adjoint-A}
  Let~$X$ be the adjoint partial flag variety of type~$\mathrm{A}_n$.
  Then
  \begin{equation}
    \HH^1(X,\tangent_X\otimes\mathcal{O}_X(-1))
    \cong\HH^1(X,\Omega_X^{2n-2}(n-1,n-1))
    \cong\field.
  \end{equation}
\end{proposition}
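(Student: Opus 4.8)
The plan is to exploit the classical description of $X$ as a smooth hypersurface of bidegree $(1,1)$ in $Y\colonequals\mathbb{P}^n\times\mathbb{P}^n$ — equivalently, the incidence variety $\Fl(1,n;n{+}1)$ — and to reduce the whole computation to cohomology on $\mathbb{P}^n$ via the K\"unneth formula. First I would pin down the twists. Writing $\mathcal{O}_Y(a,b)$ for the external tensor product of $\mathcal{O}_{\mathbb{P}^n}(a)$ and $\mathcal{O}_{\mathbb{P}^n}(b)$, adjunction on the divisor $X\subset Y$ gives $\omega_X\cong(\omega_Y\otimes\mathcal{O}_Y(1,1))|_X\cong\mathcal{O}_X(-n,-n)$, so $\omega_X^\vee\cong\mathcal{O}_X(n,n)$, the Fano index is $\mathrm{i}_X=n$, and the line bundle called $\mathcal{O}_X(1)$ in \cref{lemma:tangent-bundle-adjoint} is $\mathcal{O}_X(1,1)$. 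Combined with the identity $\tangent_X\cong\Omega_X^{\dim X-1}\otimes\omega_X^\vee$ and $\dim X=2n-1$, this already yields the stated isomorphism $\tangent_X\otimes\mathcal{O}_X(-1)\cong\Omega_X^{2n-2}(n-1,n-1)$, so it remains to compute the first cohomology of $\tangent_X(-1,-1)$.

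Next I would set up two short exact sequences. Twisting the normal bundle sequence of $X\subset Y$ by $\mathcal{O}_X(-1,-1)$ gives
\[
  0\to\tangent_X(-1,-1)\to\tangent_Y|_X(-1,-1)\to\mathcal{O}_X\to 0,
\]
and the key claim is that the middle bundle has vanishing cohomology in every degree. To prove this I would restrict $\tangent_Y(-1,-1)$ to $X$ using the ideal sheaf sequence $0\to\mathcal{G}(-1,-1)\to\mathcal{G}\to\mathcal{G}|_X\to 0$, reducing the claim to $\HH^\bullet(Y,\tangent_Y(-1,-1))=\HH^\bullet(Y,\tangent_Y(-2,-2))=0$. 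Since $\tangent_Y\cong p_1^*\tangent_{\mathbb{P}^n}\oplus p_2^*\tangent_{\mathbb{P}^n}$, every K\"unneth summand of these twisted bundles is an external product one of whose two tensor factors is $\mathcal{O}_{\mathbb{P}^n}(-1)$ or $\mathcal{O}_{\mathbb{P}^n}(-2)$; as $n\geq 2$ (the value $n=1$ gives $X\cong\mathbb{P}^1$, excluded in any case) these factors are acyclic, so K\"unneth forces the vanishing. The same ideal sheaf sequence with $\mathcal{G}=\mathcal{O}_Y$, using $\HH^\bullet(Y,\mathcal{O}_Y(-1,-1))=0$, shows $\HH^0(X,\mathcal{O}_X)=\field$ and $\HH^{>0}(X,\mathcal{O}_X)=0$.

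Feeding this into the long exact sequence of the displayed sequence gives at once $\HH^0(X,\tangent_X(-1,-1))=0$ and $\HH^1(X,\tangent_X(-1,-1))\cong\HH^0(X,\mathcal{O}_X)\cong\field$, which is the claim. There is no genuinely hard step here; the only things to watch are the bookkeeping of the bidegrees (so that each K\"unneth summand really does contain an acyclic factor) and the exclusion of the degenerate value $n=1$. An alternative would be to run Borel--Weil--Bott directly on the homogeneous bundle $\tangent_X\otimes\mathcal{O}_X(-1)$ over $\Fl(1,n;n{+}1)$, using the extension \eqref{equation:exterior-power-T_X} with $q=1$ together with \cref{lemma:homogeneous-spectral-sequence}; but that route first requires identifying the bundle $\mathcal{E}$ in type $\mathrm{A}_n$, which is absent from \cref{table:E-weights} precisely because $X$ has Picard rank $2$, so the hypersurface computation above — which is in essence the argument of \cite[\S4.2]{MR1464183} — is the more economical one.
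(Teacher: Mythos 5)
Your argument is correct, and it is essentially the computation the paper relies on: the paper gives no independent proof of this proposition but defers to \cite[\S4.2]{MR1464183}, whose argument is precisely this adjunction-plus-K\"unneth computation for the $(1,1)$ divisor in $\mathbb{P}^n\times\mathbb{P}^n$. Your write-up correctly handles the twist bookkeeping and the exclusion of $n=1$, so nothing further is needed.
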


\paragraph{Dynkin type \texorpdfstring{$\mathrm{B}_n$}{B}}
We must assume that~$n\geq 3$:
when~$n=2$ there exists
the exceptional isomorphism~$\OGr(2,5)\cong\mathbb{P}^3$,
so we have to exclude it for the proof of \cref{theorem:main}.

\begin{proposition}
  \label{proposition:adjoint-B}
  Let~$X=\OGr(2,2n+1)$ be the adjoint partial flag variety of type~$\mathrm{B}_n$
  for~$n\geq 3$.
  Then
  \begin{equation}
    \begin{aligned}
      \HH^1(X,\bigwedge^3\mathrm{T}_X\otimes\mathcal{O}_X(-1))
      &\cong\HH^1(X,\Omega_X^{4n-8}(2n-3)) \\
      &\cong
      \begin{cases}
        \HH^1(X,\mathcal{U}^{\omega_1-2\omega_2+4\omega_3}) & n=3 \\
        \HH^1(X,\mathcal{U}^{\omega_1-2\omega_2+\omega_3+2\omega_4}) & n=4 \\
        \HH^1(X,\mathcal{U}^{\omega_1-2\omega_2+\omega_3+\omega_4}) & n\geq 5 \\
      \end{cases} \\
      &\cong
      \begin{cases}
        \mathrm{V}_\mathrm{G}^{2\omega_3} & n=3 \\
        \mathrm{V}_\mathrm{G}^{2\omega_4} & n=4 \\
        \mathrm{V}_\mathrm{G}^{\omega_4}  & n\geq 5.
      \end{cases}
    \end{aligned}
  \end{equation}
\end{proposition}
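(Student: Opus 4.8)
The plan is to compute $\HH^1(X,\bigwedge^3\tangent_X\otimes\mathcal{O}_X(-1))$ by peeling off the extension structure and reducing everything to Borel--Weil--Bott on $X=\OGr(2,2n+1)$. First I would twist the short exact sequence \eqref{equation:exterior-power-T_X} for $q=3$ by $\mathcal{O}_X(-1)$, giving
\begin{equation}
  0 \to \bigwedge^3\mathcal{E}\otimes\mathcal{O}_X(-1) \to \bigwedge^3\tangent_X\otimes\mathcal{O}_X(-1) \to \bigwedge^2\mathcal{E} \to 0.
\end{equation}
The strategy is to show that $\HH^\bullet(X,\bigwedge^2\mathcal{E})=0$ in all degrees (so the twisted $\bigwedge^3$ term and the twisted $\bigwedge^3\mathcal{E}$ term have the same cohomology), and then separately that $\HH^1(X,\bigwedge^3\mathcal{E}\otimes\mathcal{O}_X(-1))$ is the stated irreducible $\mathrm{G}$-representation and that the lower terms $\HH^0$ of these bundles vanish so no cancellation can occur. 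For this I need the explicit decompositions of $\bigwedge^2\mathcal{E}$ and $\bigwedge^3\mathcal{E}$ into irreducible homogeneous bundles, which is where \cref{lemma:exterior-power} and \cref{lemma:only-first-coefficients-matter} enter: by the latter it suffices to carry out the plethysm computation in types $\mathrm{A}_1\times\mathrm{B}_{n-2}$ for the finitely many small values $n=3,4,5,6$ and read off uniform formulas for $n\geq 6$, then patch in $n=5$ by hand.

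Once the irreducible summands $\mathcal{U}^\mu$ of each bundle are in hand, the core of the argument is running Borel--Weil--Bott on each summand: shift the highest weight by $\rho$, check whether $\mu+\rho$ is regular, and if so count the length of the Weyl group element making it dominant to read off the cohomological degree and the resulting $\mathrm{G}$-representation. The claim is that after twisting by $\mathcal{O}_X(-1)$ exactly one summand of $\bigwedge^3\mathcal{E}\otimes\mathcal{O}_X(-1)$ — namely the one with weight $\omega_1-2\omega_2+4\omega_3$, $\omega_1-2\omega_2+\omega_3+2\omega_4$, or $\omega_1-2\omega_2+\omega_3+\omega_4$ according to $n=3$, $n=4$, $n\geq5$ — contributes, landing in $\HH^1$ with value $\mathrm{V}_\mathrm{G}^{2\omega_3}$, $\mathrm{V}_\mathrm{G}^{2\omega_4}$, or $\mathrm{V}_\mathrm{G}^{\omega_4}$ respectively, while every other summand of both $\bigwedge^3\mathcal{E}\otimes\mathcal{O}_X(-1)$ and $\bigwedge^2\mathcal{E}$ has a singular $\rho$-shift or contributes only in a degree $\geq 2$ (or $0$, in which case I must check it is actually killed). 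Since this single representation appears in exactly one cohomological degree, \cref{lemma:homogeneous-spectral-sequence} guarantees it survives to $\mathrm{E}_\infty$, proving non-vanishing; for the exceptional types this type of argument is the entire proof, and is automated in \cite{bott-non-vanishing}.

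The main obstacle will be the bookkeeping in low rank: for $n=3$ and $n=4$ the bundle $\mathcal{E}$ itself already has a non-uniform highest weight (cf.\ \cref{table:E-weights}), so the plethysm and the subsequent Borel--Weil--Bott computation must be redone separately, and one must be careful that the "stable" behaviour predicted by \cref{lemma:only-first-coefficients-matter} really does kick in at $n=6$ and is correctly extended down to $n=5$. A secondary subtlety is verifying the vanishing of $\HH^0$ of all the relevant bundles (so that the long exact sequence does not allow an $\HH^0$ on the quotient to cancel the $\HH^1$ we produce, and so that $\bigwedge^2\mathcal{E}$ is genuinely acyclic rather than merely having no $\HH^1$); this again reduces to checking regularity and degree of each $\rho$-shifted summand weight, but there are enough summands that the argument is best delegated to the computer implementation, with the uniform-in-$n$ reduction making the infinite family tractable.
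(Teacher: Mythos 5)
Your proposal follows essentially the same route as the paper's proof: the twisted sequence \eqref{equation:exterior-power-T_X} for $q=3$, decomposition of $\bigwedge^2\mathcal{E}$ and $\bigwedge^3\mathcal{E}$ via \cref{lemma:exterior-power} with the uniformity from \cref{lemma:only-first-coefficients-matter} reducing to finitely many ranks, Borel--Weil--Bott on each summand, and \cref{lemma:homogeneous-spectral-sequence} to see the surviving representation. The bookkeeping you flag (low-rank cases, checking no cancellation from $\HH^0$ of the quotient term) is exactly what the paper also delegates to the accompanying computation.
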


\begin{proof}
  The first isomorphism comes from the usual pairing on exterior powers of vector bundles,
  the dimension of~$X$,
  and the value of the index in \cref{table:adjoint}.

  The next isomorphism
  follows from the computation
  of the irreducible homogeneous vector bundles
  appearing in the outer terms of~\eqref{equation:exterior-power-T_X}
  for~$q=3$,
  and \cref{lemma:homogeneous-spectral-sequence}.
  When~$n=3,4,5,6$ the exterior power can be computed using \cref{lemma:exterior-power},
  as implemented in \cite{bott-non-vanishing}\footnote{
    This Git repository contains the (documented) code and its output.
  }.
  By \cref{lemma:only-first-coefficients-matter}
  it suffices to compute the case~$n=6$
  to know what happens for all~$n\geq 6$:
  we consider the third exterior power of~$\mathcal{E}$,
  which will at most involve coefficients of the fundamental weight~$\omega_5$.
  One observes that
  the non-zero cohomology is independent of~$n$ for~$n\geq 5$,
  but the intermediate computation is only independent of~$n$ for~$n\geq 6$.

  The final isomorphism is an application of the Borel--Weil--Bott theorem,
  which is again independent of~$n$ for~$n\geq 5$
  by the reasoning in \cref{lemma:only-first-coefficients-matter}.
  These computations are also implemented in \cite{bott-non-vanishing}.
\end{proof}

\begin{remark}
  \label{remark:interpretation-E-type-B}
  One can show that
  \begin{equation}
    \mathcal{E}\cong\mathcal{U}^\vee\otimes(\mathcal{Q}^\vee/\mathcal{U}),
  \end{equation}
  where~$\mathcal{U}$ is the universal subbundle,
  and~$\mathcal{Q}$ is the universal quotient bundle.
\end{remark}

\paragraph{Dynkin type \texorpdfstring{$\mathrm{D}_n$}{D}}
We can assume that~$n\geq 4$ (if it is not already excluded by convention):
when~$n=3$ there exists the exceptional
isomorphism~$\OGr(2,6)\cong\mathbb{P}(\mathrm{T}_{\mathbb{P}^3})$,
which is covered by \cref{proposition:adjoint-A}.

\begin{proposition}
  \label{proposition:adjoint-D}
  Let~$X=\OGr(2,2n)$ be the adjoint partial flag variety of type~$\mathrm{D}_n$
  for~$n\geq 4$.
  Then
  \begin{equation}
    \begin{aligned}
      \HH^1(X,\bigwedge^3\mathrm{T}_X(-1))
      &\cong\HH^1(X,\Omega_X^{4n-10}(2n-4)) \\
      &\cong
      \begin{cases}
        \HH^1(X,\mathcal{U}^{3\omega_1-2\omega_2+\omega_3+\omega_4}
          \oplus\mathcal{U}^{\omega_1-2\omega_2+3\omega_3+\omega_4}
        \oplus\mathcal{U}^{\omega_1-2\omega_2+\omega_3+3\omega_4})
        & n=4 \\
        \HH^1(X,\mathcal{U}^{\omega_1-2\omega_2+\omega_3+\omega_4+\omega_5})
        & n=5 \\
        \HH^1(X,\mathcal{U}^{\omega_1-2\omega_2+\omega_3+\omega_4})
        & n\geq 6
      \end{cases} \\
      &\cong
      \begin{cases}
        \mathrm{V}_\mathrm{G}^{2\omega_1} \oplus
        \mathrm{V}_\mathrm{G}^{2\omega_3} \oplus \mathrm{V}_\mathrm{G}^{2\omega_4} & n=4 \\
        \mathrm{V}_\mathrm{G}^{\omega_4+\omega_5} & n=5 \\
        \mathrm{V}_\mathrm{G}^{\omega_4} & n\geq6.
      \end{cases}
    \end{aligned}
  \end{equation}
\end{proposition}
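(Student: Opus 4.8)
The plan is to follow verbatim the structure of the proof of \cref{proposition:adjoint-B}, since the type~$\mathrm{D}_n$ case is entirely parallel. First I would obtain the identification $\HH^1(X,\bigwedge^3\tangent_X(-1))\cong\HH^1(X,\Omega_X^{4n-10}(2n-4))$ from the usual perfect pairing $\bigwedge^q\tangent_X\cong\Omega_X^{\dim X-q}\otimes\omega_X^\vee$, together with $\dim X=4n-7$ and $\fanoindex_X=2n-3$ from \cref{table:adjoint}; the twist bookkeeping is routine.

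Next I would reduce $\HH^1(X,\bigwedge^3\tangent_X(-1))$ to cohomology of completely reducible bundles. Applying \eqref{equation:exterior-power-T_X} with $q=3$ gives a short exact sequence relating $\bigwedge^3\tangent_X(-1)$ to $\bigwedge^3\mathcal{E}(-1)$ and $\bigwedge^2\mathcal{E}$; each of these has a Jordan--H\"older filtration by irreducible homogeneous bundles, to which \cref{lemma:homogeneous-spectral-sequence} applies. The exterior powers themselves are computed via \cref{lemma:exterior-power}: the plethysm reduces to a computation in types~$\mathrm{A}_1$ and~$\mathrm{D}_{n-2}$, and by \cref{lemma:only-first-coefficients-matter} it suffices to run the explicit computation (as implemented in \cite{bott-non-vanishing}) for the finitely many cases $n=4,5,6,7$ in order to pin down the answer for all $n\geq 6$, with $n=4,5$ behaving exceptionally. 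I would then observe that among the irreducible summands of the outer terms, exactly the ones listed — $\mathcal{U}^{3\omega_1-2\omega_2+\omega_3+\omega_4}\oplus\mathcal{U}^{\omega_1-2\omega_2+3\omega_3+\omega_4}\oplus\mathcal{U}^{\omega_1-2\omega_2+\omega_3+3\omega_4}$ for $n=4$, $\mathcal{U}^{\omega_1-2\omega_2+\omega_3+\omega_4+\omega_5}$ for $n=5$, and $\mathcal{U}^{\omega_1-2\omega_2+\omega_3+\omega_4}$ for $n\geq 6$ — contribute a nonzero $\HH^1$, and that the relevant $\mathrm{G}$-representations occur in a single cohomological degree on the $\mathrm{E}_1$-page, so that by \cref{lemma:homogeneous-spectral-sequence} they survive to $\mathrm{E}_\infty$ and hence appear in $\HH^1(X,\bigwedge^3\tangent_X(-1))$; one must also check no cancellation against the $\bigwedge^3\mathcal{E}(-1)$ term.

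Finally, the last isomorphism is a direct application of Borel--Weil--Bott: one computes $\rho$-shifted dominant chambers for the listed weights $\omega_1-2\omega_2+\cdots$, finds a single simple reflection is needed (contributing to $\HH^1$), and reads off the resulting dominant weight, giving $\mathrm{V}_\mathrm{G}^{2\omega_1}\oplus\mathrm{V}_\mathrm{G}^{2\omega_3}\oplus\mathrm{V}_\mathrm{G}^{2\omega_4}$ for $n=4$, $\mathrm{V}_\mathrm{G}^{\omega_4+\omega_5}$ for $n=5$, and $\mathrm{V}_\mathrm{G}^{\omega_4}$ for $n\geq 6$; again uniformity for $n\geq 6$ follows from \cref{lemma:only-first-coefficients-matter}. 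The main obstacle I anticipate is \emph{not} any single step in isolation but rather the stabilization argument: making sure that the intermediate plethysm computation has genuinely stabilized by $n=6$ (as opposed to the final cohomology, which already stabilizes at $n=5$), and carefully handling the low-rank exceptional behaviour at $n=4$, where the triality symmetry of $\mathrm{D}_4$ splits the expected single summand into three — one should double-check the $n=4$ and $n=5$ computations by hand against the machine output in \cite{bott-non-vanishing}, since the general-$n$ pattern alone would give the wrong answer there.
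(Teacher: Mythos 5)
Your proposal is correct and is precisely the paper's approach: the paper's entire proof of this proposition is the single remark that it is parallel to the proof of \cref{proposition:adjoint-B}, and that parallel argument is exactly what you spell out. One small correction to your closing remark: in type $\mathrm{D}_n$ the stabilization thresholds are shifted by one relative to type $\mathrm{B}_n$ --- by \cref{lemma:only-first-coefficients-matter} the plethysm is only guaranteed independent of $n$ for $n\geq 7$, and the cohomology stabilizes at $n=6$ (not $6$ and $5$ as you wrote) --- which is why $n=7$ rightly appears in your list of cases to compute.
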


The proof is parallel to that of \cref{proposition:adjoint-B}.

\begin{remark}
  \label{remark:interpretation-E-type-D}
  Similar to \cref{remark:interpretation-E-type-B},
  one can show that
  \begin{equation}
    \mathcal{E}\cong\mathcal{U}^\vee\otimes(\mathcal{Q}^\vee/\mathcal{U}),
  \end{equation}
  where~$\mathcal{U}$ is the universal subbundle,
  and~$\mathcal{Q}$ is the universal quotient bundle.
\end{remark}

\paragraph{Exceptional Dynkin types}
For the exceptional adjoint varieties
it turns out we need to consider an exterior power
which depends the type.
The following properties,
whose proof always consists of computing~$\bigwedge^q\mathcal{E}(-1)$
and~$\bigwedge^{q-1}\mathcal{E}$
for the indicated value of~$q$,
and verifying that the cohomology in degree~1
cannot be cancelled by the global sections,
similar to the proofs of \cref{proposition:adjoint-B,proposition:adjoint-D},
is implemented in the accompanying code and its output \cite{bott-non-vanishing}.
We describe the conclusions here.
\begin{proposition}
  \label{proposition:adjoint-E6}
  Let~$X$ be the adjoint partial flag variety in type~$\mathrm{E}_6$.
  Then we have
  \begin{equation}
    \HH^1(X,\bigwedge^5\mathrm{T}_X\otimes\mathcal{O}_X(-1))
    \cong\HH^1(X,\Omega_X^{16}(10))
    \cong\HH^1(X,\mathcal{U}^{2\omega_1-2\omega_2+\omega_4+2\omega_6})
    \cong\mathrm{V}_\mathrm{G}^{2\omega_1+2\omega_6}
  \end{equation}
\end{proposition}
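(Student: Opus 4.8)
The plan is to follow the template established in the proofs of \cref{proposition:adjoint-B,proposition:adjoint-D}, now with $q=5$. First I would establish the first isomorphism purely formally: from \eqref{equation:exterior-power-T_X} we have the Serre-duality-type identity $\bigwedge^q\tangent_X\otimes\mathcal{O}_X(-1)\cong\Omega_X^{\dim X-q}\otimes(\omega_X^\vee\otimes\mathcal{O}_X(-1))$, and plugging in $\dim X=21$, $\fanoindex_X=11$ from \cref{table:adjoint} and $q=5$ gives $\Omega_X^{16}(10)$. This step is routine bookkeeping.

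The substance is the middle isomorphism. Here I would invoke the short exact sequence \eqref{equation:exterior-power-T_X} for $q=5$, namely $0\to\bigwedge^5\mathcal{E}\to\bigwedge^5\tangent_X\to\bigwedge^4\mathcal{E}\otimes\mathcal{O}_X(1)\to 0$, twist by $\mathcal{O}_X(-1)$, and take the long exact sequence in cohomology. I would then decompose both $\bigwedge^5\mathcal{E}\otimes\mathcal{O}_X(-1)$ and $\bigwedge^4\mathcal{E}$ into irreducible homogeneous bundles using \cref{lemma:exterior-power}: the Levi derived subgroup $\mathrm{L}'$ here is of type $\mathrm{A}_1\times\mathrm{A}_4$ (deleting vertex $2$ from the $\mathrm{E}_6$ diagram), so this reduces to a plethysm computation for $\bigwedge^j$ of the irreducible $\mathrm{A}_1\times\mathrm{A}_4$-representation of highest weight read off from $\mathcal{E}=\mathcal{U}^{-\omega_2+\omega_4}$ in \cref{table:E-weights}, followed by the central-character shift of \cref{lemma:exterior-power}. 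This is a finite computation, carried out in the accompanying code \cite{bott-non-vanishing}. Applying Borel--Weil--Bott to each irreducible summand, I expect that among all summands of $\bigwedge^5\mathcal{E}\otimes\mathcal{O}_X(-1)$ and $\bigwedge^4\mathcal{E}\otimes\mathcal{O}_X(-1)$ exactly one contributes nonzero cohomology, concentrated in degree $1$, coming from the weight $\mu\colonequals 2\omega_1-2\omega_2+\omega_4+2\omega_6$; running Borel--Weil--Bott on $\mu$ (adding $\rho$, finding the unique dominant Weyl-conjugate, counting inversions) then yields $\HH^1(X,\mathcal{U}^\mu)\cong\mathrm{V}_\mathrm{G}^{2\omega_1+2\omega_6}$ and produces the final isomorphism.

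To conclude that $\HH^1(X,\bigwedge^5\tangent_X\otimes\mathcal{O}_X(-1))$ is genuinely nonzero — not merely that it receives a contribution — I would use \cref{lemma:homogeneous-spectral-sequence}: since the representation $\mathrm{V}_\mathrm{G}^{2\omega_1+2\omega_6}$ appears on the $\mathrm{E}_1$-page in the single cohomological degree $1$ (both for the Jordan--Hölder filtration of each bundle and, crucially, in the long exact sequence coming from \eqref{equation:exterior-power-T_X}, i.e.\ it does not appear in $\HH^0$ or $\HH^2$ of the relevant graded pieces), it survives to $\mathrm{E}_\infty$; in particular it cannot be cancelled by global sections. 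The main obstacle is organizational rather than conceptual: one must verify that \emph{no} irreducible summand of $\bigwedge^5\mathcal{E}\otimes\mathcal{O}_X(-1)$ or $\bigwedge^4\mathcal{E}$ has $\mathrm{V}_\mathrm{G}^{2\omega_1+2\omega_6}$ in its $\HH^0$ or $\HH^2$, so that the degree-$1$ class is not a boundary; this is exactly the kind of exhaustive Borel--Weil--Bott check that is delegated to \cite{bott-non-vanishing}, but it is the one place where something could in principle go wrong, so it deserves explicit confirmation.
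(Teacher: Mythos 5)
Your proposal follows the same route as the paper's (very terse) proof of the exceptional cases: compute $\bigwedge^5\mathcal{E}\otimes\mathcal{O}_X(-1)$ and $\bigwedge^4\mathcal{E}$, decompose into irreducible summands via \cref{lemma:exterior-power}, apply Borel--Weil--Bott, and use \cref{lemma:homogeneous-spectral-sequence} to see that the degree-one class coming from $\mathcal{U}^{2\omega_1-2\omega_2+\omega_4+2\omega_6}$ cannot be cancelled by global sections, with the exhaustive check delegated to \cite{bott-non-vanishing}. There is, however, one concrete error in your setup: deleting vertex~$2$ from the $\mathrm{E}_6$ diagram does \emph{not} give $\mathrm{A}_1\times\mathrm{A}_4$ --- vertex~$2$ is the branch node attached to the trivalent vertex~$4$, so removing it leaves the connected chain on vertices $1,3,4,5,6$, i.e.\ $\mathrm{L}'$ is of type $\mathrm{A}_5$ (you would get $\mathrm{A}_1\times\mathrm{A}_4$ by deleting vertex~$3$ or~$5$). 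Consequently the plethysm to be computed is $\bigwedge^5$ and $\bigwedge^4$ of the $20$-dimensional $\mathrm{A}_5$-representation $\bigwedge^3\field^6$, the restriction of $\mathcal{U}^{-\omega_2+\omega_4}$ (note this rank matches $\dim X=21=20+1$ via \eqref{equation:T_X}); as written, your computation would run the wrong plethysm. A second, minor slip: after twisting \eqref{equation:exterior-power-T_X} by $\mathcal{O}_X(-1)$ the quotient term is $\bigwedge^4\mathcal{E}$, not $\bigwedge^4\mathcal{E}\otimes\mathcal{O}_X(-1)$; you state this correctly the first time and incorrectly the second. The rest of the argument, including the survival argument on the $\mathrm{E}_1$-page, matches the paper.
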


\begin{proposition}
  \label{proposition:adjoint-E7}
  Let~$X$ be the adjoint partial flag variety in type~$\mathrm{E}_7$.
  Then we have
  \begin{equation}
    \HH^1(X,\bigwedge^7\mathrm{T}_X\otimes\mathcal{O}_X(-1))
    \cong\HH^1(X,\Omega_X^{26}(16))
    \cong\HH^1(X,\mathcal{U}^{-2\omega_1+\omega_3+3\omega_6})
    \cong\mathrm{V}_\mathrm{G}^{3\omega_6}.
  \end{equation}
\end{proposition}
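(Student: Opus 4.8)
The plan is to reuse the mechanism behind \cref{proposition:adjoint-B,proposition:adjoint-D}, now for $X=\mathrm{E}_7/\mathrm{P}_1$, which has $\dim X=33$ and Fano index $\mathrm{i}_X=17$ by \cref{table:adjoint}. The first isomorphism is formal: for the rank-$33$ bundle $\mathrm{T}_X$ one has $\bigwedge^7\mathrm{T}_X\cong\Omega_X^{26}\otimes\det\mathrm{T}_X$, and $\det\mathrm{T}_X=\omega_X^\vee\cong\mathcal{O}_X(17)$ by \cref{lemma:tangent-bundle-adjoint}, so twisting by $\mathcal{O}_X(-1)$ gives $\bigwedge^7\mathrm{T}_X\otimes\mathcal{O}_X(-1)\cong\Omega_X^{26}(16)$. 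For the remaining two isomorphisms I would twist the sequence \eqref{equation:exterior-power-T_X} with $q=7$ by $\mathcal{O}_X(-1)$, obtaining
\[
  0\to\bigwedge^7\mathcal{E}(-1)\to\bigwedge^7\mathrm{T}_X(-1)\to\bigwedge^6\mathcal{E}\to 0,
\]
observe that both outer terms are completely reducible because $\mathcal{E}$ is irreducible, so the Jordan--H\"older graded pieces of $\bigwedge^7\mathrm{T}_X(-1)$ are exactly the irreducible summands of $\bigwedge^6\mathcal{E}$ together with those of $\bigwedge^7\mathcal{E}(-1)$, and feed this into \cref{lemma:homogeneous-spectral-sequence}.

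To obtain those summands I would apply \cref{lemma:exterior-power}. Here $\mathcal{E}=\mathcal{U}^{-\omega_1+\omega_3}$ by \cref{table:E-weights}, and deleting the first node of $\mathrm{E}_7$ leaves a copy of $\mathrm{D}_6$ in which node~$3$ is one of the two spin nodes; hence, as an $\mathrm{L}'$-representation, $\mathcal{E}$ restricts to a $32$-dimensional half-spin representation of $\mathrm{Spin}_{12}$, in agreement with $\rank\mathcal{E}=\dim X-1=32$. Thus the decompositions of $\bigwedge^6\mathcal{E}$ and $\bigwedge^7\mathcal{E}$ reduce, via \cref{lemma:exterior-power}, to computing $\bigwedge^6$ and $\bigwedge^7$ of this half-spin representation of $\mathrm{D}_6$ (plus the bookkeeping of the central $\mathrm{Z}(\mathrm{L})$-weights). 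This is a finite but genuinely large computation --- the underlying vector spaces have dimension $\binom{32}{6}$ and $\binom{32}{7}$ --- and, unlike in types $\mathrm{B}_n$ and $\mathrm{D}_n$, there is no family to exploit and no analogue of \cref{lemma:only-first-coefficients-matter} to cut it down, so it really is the main burden of the proof; it is carried out in \cite{bott-non-vanishing}.

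With the list of graded pieces in hand, the final step is the Borel--Weil--Bott theorem applied to each of them. The piece producing the answer is $\mathcal{U}^{-2\omega_1+\omega_3+3\omega_6}$: writing $\rho$ for the sum of the fundamental weights, the shifted weight $-2\omega_1+\omega_3+3\omega_6+\rho=-\omega_1+\omega_2+2\omega_3+\omega_4+\omega_5+4\omega_6+\omega_7$ is regular, and the single simple reflection $s_1$ (using $\alpha_1=2\omega_1-\omega_3$) carries it to the dominant weight $\rho+3\omega_6$, so that $\HH^\bullet(X,\mathcal{U}^{-2\omega_1+\omega_3+3\omega_6})$ equals $\mathrm{V}_\mathrm{G}^{3\omega_6}$ placed in cohomological degree~$1$. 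It then remains to check that $\mathrm{V}_\mathrm{G}^{3\omega_6}$ occurs in a single cohomological degree on the whole $\mathrm{E}_1$-page --- i.e.\ that no other summand of $\bigwedge^6\mathcal{E}$ or $\bigwedge^7\mathcal{E}(-1)$ puts $\mathrm{V}_\mathrm{G}^{3\omega_6}$ in degree~$0$ or~$2$ --- so that by \cref{lemma:homogeneous-spectral-sequence} it survives to $\mathrm{E}_\infty$, and hence $\HH^1(X,\bigwedge^7\mathrm{T}_X(-1))\neq 0$, as needed for \cref{theorem:main}. This ``no cancellation against global sections'' verification is the only place where the outcome could conceivably be different, and pinning it down --- together with the finer claim that $\HH^1$ is \emph{exactly} $\mathrm{V}_\mathrm{G}^{3\omega_6}$ --- is what the accompanying code \cite{bott-non-vanishing} confirms.
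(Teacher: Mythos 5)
Your proposal is correct and follows essentially the same route as the paper: the paper's proof of this proposition consists precisely of applying \eqref{equation:exterior-power-T_X} with $q=7$, decomposing $\bigwedge^7\mathcal{E}(-1)$ and $\bigwedge^6\mathcal{E}$ via \cref{lemma:exterior-power}, feeding the pieces into \cref{lemma:homogeneous-spectral-sequence} and Borel--Weil--Bott, and delegating the actual decomposition and the no-cancellation check to \cite{bott-non-vanishing}. Your explicit identification of $\mathcal{E}|_{\mathrm{L}'}$ as a half-spin representation of $\mathrm{D}_6$ and your hand verification that $s_1$ sends $-2\omega_1+\omega_3+3\omega_6+\rho$ to $\rho+3\omega_6$ are correct and go slightly beyond what the paper writes out.
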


\begin{proposition}
  \label{proposition:adjoint-E8}
  Let~$X$ be the adjoint partial flag variety in type~$\mathrm{E}_8$.
  Then we have
  \begin{equation}
    \HH^1(X,\bigwedge^{11}\tangent_X\otimes\mathcal{O}_X(-1))
    \cong\HH^1(X,\Omega_X^{46}(28))
    \cong\HH^1(X,\mathcal{U}^{5\omega_1+\omega_7-2\omega_8})
    \cong\mathrm{V}_{\mathrm{G}}^{5\omega_1}.
  \end{equation}
\end{proposition}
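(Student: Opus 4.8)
The plan is to argue exactly as in the proofs of \cref{proposition:adjoint-B,proposition:adjoint-D}, now with $\mathrm{G}=\mathrm{E}_8$, $\mathrm{P}=\mathrm{P}_8$, and with the eleventh exterior power of the tangent bundle in place of the third. The first isomorphism is purely formal: the perfect pairing $\bigwedge^{11}\tangent_X\otimes\bigwedge^{46}\tangent_X\to\bigwedge^{57}\tangent_X=\omega_X^\vee$ identifies $\bigwedge^{11}\tangent_X$ with $\Omega_X^{46}\otimes\omega_X^\vee$, and since $\dim X=57$ and $\mathrm{i}_X=29$ by \cref{table:adjoint}, \cref{lemma:tangent-bundle-adjoint} gives $\omega_X^\vee\cong\mathcal{O}_X(29)$, so $\omega_X^\vee\otimes\mathcal{O}_X(-1)\cong\mathcal{O}_X(28)$ and hence $\bigwedge^{11}\tangent_X\otimes\mathcal{O}_X(-1)\cong\Omega_X^{46}(28)$ already as homogeneous bundles.

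For the second isomorphism I would twist the short exact sequence \eqref{equation:exterior-power-T_X} for $q=11$ by $\mathcal{O}_X(-1)$, obtaining
\[
  0\to\bigwedge^{11}\mathcal{E}\otimes\mathcal{O}_X(-1)\to\bigwedge^{11}\tangent_X\otimes\mathcal{O}_X(-1)\to\bigwedge^{10}\mathcal{E}\to 0,
\]
where $\mathcal{E}=\mathcal{U}^{\omega_7-\omega_8}$ by \cref{table:E-weights}. Thus $\mathcal{E}$ has rank $56=\dim X-1$, and it restricts on the derived Levi subgroup $\mathrm{L}'$ — which is of type $\mathrm{E}_7$, obtained by deleting the eighth vertex — to the $56$-dimensional minuscule representation $\mathrm{V}_{\mathrm{L}'}^{\omega_7'}$. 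By \cref{lemma:exterior-power} the decompositions of $\bigwedge^{11}\mathcal{E}$ and $\bigwedge^{10}\mathcal{E}$ into irreducible homogeneous bundles are obtained from the plethysms $\bigwedge^{11}\mathrm{V}_{\mathrm{L}'}^{\omega_7'}$ and $\bigwedge^{10}\mathrm{V}_{\mathrm{L}'}^{\omega_7'}$ by adjusting the coefficient of $\omega_8$; both bundles are completely reducible since $\mathcal{E}$ is irreducible and the characteristic is $0$. Consequently the irreducible summands of $\bigwedge^{11}\mathcal{E}(-1)$ and of $\bigwedge^{10}\mathcal{E}$ are exactly the graded pieces of the Jordan–Hölder filtration of $\bigwedge^{11}\tangent_X\otimes\mathcal{O}_X(-1)$, and \cref{lemma:homogeneous-spectral-sequence} furnishes a spectral sequence of $\mathrm{G}$-representations converging to its cohomology, whose $\mathrm{E}_1$-page is the direct sum of the cohomologies of these summands.

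I would then feed each summand to the Borel–Weil–Bott theorem. The decisive point is that $\mathcal{U}^{5\omega_1+\omega_7-2\omega_8}$ occurs and contributes to $\HH^1$: adding $\rho$ produces the regular weight $6\omega_1+\omega_2+\cdots+\omega_6+2\omega_7-\omega_8$, and the single reflection $s_8$ turns this into the regular dominant weight $6\omega_1+\omega_2+\cdots+\omega_8$ (it replaces the coefficients $(2,-1)$ on $(\omega_7,\omega_8)$ by $(1,1)$), so $\HH^1(X,\mathcal{U}^{5\omega_1+\omega_7-2\omega_8})\cong\mathrm{V}_\mathrm{G}^{s_8\cdot(5\omega_1+\omega_7-2\omega_8)}=\mathrm{V}_\mathrm{G}^{5\omega_1}$ with all other cohomology vanishing. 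To upgrade this to the asserted isomorphisms one checks, over all graded pieces of the two outer terms, that $\mathrm{V}_\mathrm{G}^{5\omega_1}$ occurs in the cohomology of exactly one of them and only in cohomological degree $1$ — so that, by the second assertion of \cref{lemma:homogeneous-spectral-sequence}, it survives to $\mathrm{E}_\infty$ and in particular is not cancelled against the global sections — and that no other $\mathrm{G}$-representation lands in degree $1$ on the $\mathrm{E}_1$-page. This bookkeeping is the only genuine obstacle: it amounts to decomposing the tenth and eleventh exterior powers of a $56$-dimensional $\mathrm{E}_8$-bundle and running Borel–Weil–Bott on every constituent, a large but entirely finite and mechanical computation carried out in the accompanying code \cite{bott-non-vanishing}. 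Unlike types $\mathrm{B}$ and $\mathrm{D}$, there is no parameter $n$ to stabilise, so a single run settles the statement.
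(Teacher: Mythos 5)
Your proposal follows exactly the paper's route: the paper proves this proposition by computing $\bigwedge^{11}\mathcal{E}(-1)$ and $\bigwedge^{10}\mathcal{E}$ via \cref{lemma:exterior-power}, running Borel--Weil--Bott on each irreducible summand, and checking via \cref{lemma:homogeneous-spectral-sequence} that the degree-one cohomology is not cancelled, with the bookkeeping delegated to the accompanying code \cite{bott-non-vanishing}. Your explicit verification that $s_8$ sends $\lambda+\rho$ to the regular dominant weight $6\omega_1+\omega_2+\cdots+\omega_8$, yielding $\mathrm{V}_{\mathrm{G}}^{5\omega_1}$ in degree one, is a correct added detail consistent with the stated result.
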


\begin{proposition}
  \label{proposition:adjoint-F4}
  Let~$X$ be the adjoint partial flag variety in type~$\mathrm{F}_4$.
  Then we have
  \begin{equation}
    \HH^1(X,\bigwedge^4\mathrm{T}_X\otimes\mathcal{O}_X(-1))
    \cong\HH^1(X,\Omega_X^{11}(7))
    \cong\HH^1(X,\mathcal{U}^{-2\omega_1+\omega_2+3\omega_4})
    \cong\mathrm{V}_\mathrm{G}^{3\omega_4}.
  \end{equation}
\end{proposition}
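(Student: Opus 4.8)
The plan is to follow exactly the template established in the proofs of \cref{proposition:adjoint-B,proposition:adjoint-D}, adapted to type~$\mathrm{F}_4$ with the relevant exterior power~$q=4$. First I would justify the chain of isomorphisms from left to right. The identification $\bigwedge^4\tangent_X\otimes\mathcal{O}_X(-1)\cong\Omega_X^{11}(7)$ follows from the perfect pairing $\bigwedge^q\tangent_X\cong\Omega_X^{\dim X-q}\otimes\omega_X^\vee$, using $\dim X=15$ from \cref{table:adjoint} so that $\dim X-q=11$, together with $\omega_X^\vee\cong\mathcal{O}_X(\mathrm{i}_X)=\mathcal{O}_X(8)$ and hence $\omega_X^\vee\otimes\mathcal{O}_X(-1)\cong\mathcal{O}_X(7)$.

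Next I would compute the outer terms of the short exact sequence \eqref{equation:exterior-power-T_X} for $q=4$, namely $\bigwedge^4\mathcal{E}$ and $\bigwedge^3\mathcal{E}\otimes\mathcal{O}_X(1)$, by decomposing them into irreducible homogeneous vector bundles. Here $\mathcal{E}=\mathcal{U}^{-\omega_1+\omega_2}$ by \cref{table:E-weights}, and the decomposition of its exterior powers is obtained via \cref{lemma:exterior-power}: one computes $\bigwedge^q$ of the induced $\mathrm{L}'$-representation, where $\mathrm{L}'$ is the derived Levi with Dynkin diagram of type~$\mathrm{A}_1\times\mathrm{C}_3$ (omitting the marked vertex~$1$ of~$\mathrm{F}_4$), then twists each summand by the appropriate multiple of~$\omega_1$ determined by the $r_\lambda$-correction. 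This is a finite, explicit plethysm computation carried out in the accompanying code~\cite{bott-non-vanishing}. Applying the Borel--Weil--Bott theorem to each resulting irreducible bundle, and invoking \cref{lemma:homogeneous-spectral-sequence}, one reads off which $\mathrm{G}$-representations appear in $\HH^1$ of the outer terms.

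The crucial point — and the step I expect to be the main obstacle in principle, though it is routine in practice given the code — is verifying that the contribution to $\HH^1$ actually survives to $\HH^1(X,\bigwedge^4\tangent_X\otimes\mathcal{O}_X(-1))$, i.e.\ that it is not cancelled in the long exact sequence coming from \eqref{equation:exterior-power-T_X}, nor by a differential in the Jordan--H\"older spectral sequence. Following the strategy of \cref{lemma:homogeneous-spectral-sequence}, it suffices to exhibit an irreducible $\mathrm{G}$-representation — here $\mathrm{V}_\mathrm{G}^{3\omega_4}$, arising from the summand $\mathcal{U}^{-2\omega_1+\omega_2+3\omega_4}$ — that appears in $\HH^1$ on the $\mathrm{E}_1$-page in a single cohomological degree and does not appear in $\HH^0$ or $\HH^2$ of any other associated graded piece, hence cannot be a source or target of any differential and must persist to $\HH^1$ of the total bundle. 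One checks in particular that the global sections $\HH^0$ of the two outer terms (and of the intervening irreducible pieces) do not contain a copy of $\mathrm{V}_\mathrm{G}^{3\omega_4}$ that could cancel it. All of these are finite-dimensional representation-theoretic bookkeeping tasks, performed by \cite{bott-non-vanishing}, which completes the proof.

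\begin{proof}
  The first isomorphism is the perfect pairing on exterior powers of the tangent bundle,
  using~$\dim X=15$ and~$\omega_X^\vee\cong\mathcal{O}_X(8)$ from \cref{table:adjoint},
  so that~$\bigwedge^4\tangent_X\otimes\mathcal{O}_X(-1)\cong\Omega_X^{11}\otimes\omega_X^\vee\otimes\mathcal{O}_X(-1)\cong\Omega_X^{11}(7)$.

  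For the second isomorphism we apply \eqref{equation:exterior-power-T_X} with~$q=4$,
  decompose the outer terms~$\bigwedge^4\mathcal{E}$ and~$\bigwedge^3\mathcal{E}\otimes\mathcal{O}_X(1)$
  into irreducible homogeneous vector bundles using \cref{lemma:exterior-power}
  (the computation in the derived Levi of type~$\mathrm{A}_1\times\mathrm{C}_3$, followed by the~$\omega_1$-correction),
  and feed the result into \cref{lemma:homogeneous-spectral-sequence}.
  The irreducible bundle~$\mathcal{U}^{-2\omega_1+\omega_2+3\omega_4}$ arises in this way,
  and one verifies in \cite{bott-non-vanishing} that among all associated graded pieces of~$\bigwedge^4\mathcal{E}$
  and~$\bigwedge^3\mathcal{E}\otimes\mathcal{O}_X(1)$ it is the only one whose cohomology contributes to~$\HH^1$.

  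The final isomorphism is the Borel--Weil--Bott theorem applied to~$\mathcal{U}^{-2\omega_1+\omega_2+3\omega_4}$,
  giving~$\HH^1(X,\mathcal{U}^{-2\omega_1+\omega_2+3\omega_4})\cong\mathrm{V}_\mathrm{G}^{3\omega_4}$.
  Since this~$\mathrm{G}$-representation appears on the~$\mathrm{E}_1$-page of the spectral sequence
  of \cref{lemma:homogeneous-spectral-sequence} in the single cohomological degree~$1$,
  and since~$\mathrm{V}_\mathrm{G}^{3\omega_4}$ does not occur in~$\HH^0$ of any associated graded piece of the outer terms,
  it cannot be cancelled by a differential nor by the connecting map of \eqref{equation:exterior-power-T_X},
  so it survives to~$\HH^1(X,\bigwedge^4\tangent_X\otimes\mathcal{O}_X(-1))$.
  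In particular this group is non-zero, so~$X$ does not satisfy Bott vanishing.
\end{proof}
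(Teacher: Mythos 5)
Your proposal is correct and follows the paper's own method: the paper handles all the exceptional adjoint cases by exactly this computation (decompose the outer terms of the twisted sequence \eqref{equation:exterior-power-T_X}, apply \cref{lemma:exterior-power}, Borel--Weil--Bott and \cref{lemma:homogeneous-spectral-sequence}, and check that the degree-one cohomology cannot be cancelled by global sections), delegating the bookkeeping to \cite{bott-non-vanishing}. One small slip: since the target is $\HH^1(X,\bigwedge^4\tangent_X\otimes\mathcal{O}_X(-1))$, the outer terms you should decompose are $\bigwedge^4\mathcal{E}\otimes\mathcal{O}_X(-1)$ and $\bigwedge^3\mathcal{E}$, not $\bigwedge^4\mathcal{E}$ and $\bigwedge^3\mathcal{E}\otimes\mathcal{O}_X(1)$; with that adjustment your argument coincides with the paper's.
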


\begin{proposition}
  \label{proposition:adjoint-G2}
  Let~$X$ be the adjoint partial flag variety in type~$\mathrm{G}_2$.
  Then we have
  \begin{equation}
    \HH^1(X,\bigwedge^2\mathrm{T}_X\otimes\mathcal{O}_X(-1))
    \cong\HH^1(X,\Omega_X^4(2))
    \cong\HH^1(X,\mathcal{U}^{4\omega_1-2\omega_2})
    \cong\mathrm{V}_\mathrm{G}^{\omega_1}.
  \end{equation}
\end{proposition}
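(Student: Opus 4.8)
The plan is to repeat, step for step, the argument of \cref{proposition:adjoint-B}; since type~$\mathrm{G}_2$ carries no rank parameter, every step is a finite computation and there is nothing to make uniform. As there, the first isomorphism is formal: it combines the perfect pairing $\bigwedge^q\tangent_X\otimes\bigwedge^{\dim X-q}\tangent_X\to\bigwedge^{\dim X}\tangent_X=\omega_X^\vee$, i.e., the identification $\bigwedge^q\tangent_X\cong\Omega_X^{\dim X-q}\otimes\omega_X^\vee$, with $\mathcal{O}_X(\mathrm{i}_X)\cong\omega_X^\vee$ from \cref{lemma:tangent-bundle-adjoint} and the values of $\dim X$ and~$\mathrm{i}_X$ recorded in \cref{table:adjoint}.

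For the second isomorphism I would take the short exact sequence~\eqref{equation:exterior-power-T_X} for~$q=2$ and twist it by~$\mathcal{O}_X(-1)$, obtaining
\begin{equation*}
  0\to\bigwedge^2\mathcal{E}(-1)\to\bigwedge^2\tangent_X(-1)\to\mathcal{E}\to 0.
\end{equation*}
The bundle~$\mathcal{E}$ of \cref{table:E-weights} restricts on the derived Levi subgroup~$\mathrm{L}'\cong\mathrm{SL}_2$ (the subdiagram of~$\mathrm{G}_2$ obtained by deleting the marked vertex) to the third symmetric power of the standard representation, and the classical plethysm $\bigwedge^2\operatorname{Sym}^3\cong\operatorname{Sym}^4\oplus\operatorname{Sym}^0$ together with \cref{lemma:exterior-power} (to reinstate the character of~$\mathrm{Z}(\mathrm{L})$, keeping track of the twist by~$\mathcal{O}_X(-1)$) gives
\begin{equation*}
  \bigwedge^2\mathcal{E}(-1)\cong\mathcal{U}^{4\omega_1-2\omega_2}\oplus\mathcal{O}_X,
\end{equation*}
which is exactly what the accompanying code~\cite{bott-non-vanishing} verifies.

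It then remains to apply the Borel--Weil--Bott theorem to the irreducible bundles in sight. The weight $(3\omega_1-\omega_2)+\rho=4\omega_1$ attached to~$\mathcal{E}$ is singular (fixed by~$s_2$), so~$\mathcal{E}$ is acyclic; $\HH^{>0}(X,\mathcal{O}_X)=0$ trivially; and $(4\omega_1-2\omega_2)+\rho=5\omega_1-\omega_2$ is regular, with $s_2(5\omega_1-\omega_2)=2\omega_1+\omega_2=\rho+\omega_1$ dominant, so~$\HH^\bullet(X,\mathcal{U}^{4\omega_1-2\omega_2})$ is concentrated in cohomological degree~$1$ and equals~$\mathrm{V}_\mathrm{G}^{\omega_1}$. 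Feeding this into the long exact cohomology sequence of the displayed extension (equivalently, into \cref{lemma:homogeneous-spectral-sequence}): since $\HH^0(X,\mathcal{E})=\HH^1(X,\mathcal{E})=0$ and $\HH^1(X,\mathcal{O}_X)=0$, one obtains $\HH^1(X,\bigwedge^2\tangent_X(-1))\cong\HH^1(X,\bigwedge^2\mathcal{E}(-1))\cong\mathrm{V}_\mathrm{G}^{\omega_1}$, and in particular this class cannot be cancelled by global sections. This establishes all the asserted isomorphisms.

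The computation is genuinely tiny, so the only real point of care is the bookkeeping of the~$\mathrm{Z}(\mathrm{L})$-character in \cref{lemma:exterior-power}: the quantity $r_\lambda=(\lambda,\omega_2)/(\omega_2,\omega_2)$ for the highest weight~$\lambda=3\omega_1-\omega_2$ of~$\mathcal{E}$ equals~$\tfrac12$, so one must carry the integral combination~$2r_\lambda-r_\nu$ around without rounding either summand. Once the two constituents of~$\bigwedge^2\mathcal{E}(-1)$ have been pinned down correctly, Borel--Weil--Bott finishes the argument and there is nothing left that could cancel the~$\HH^1$.
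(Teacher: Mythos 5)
Your proof is correct and takes essentially the same route as the paper, which for the exceptional types simply delegates the computation of $\bigwedge^{q}\mathcal{E}(-1)$ and $\bigwedge^{q-1}\mathcal{E}$ plus the Borel--Weil--Bott step to the accompanying code: your hand computation $\bigwedge^2\mathcal{E}(-1)\cong\mathcal{U}^{4\omega_1-2\omega_2}\oplus\mathcal{O}_X$, the singularity of $(3\omega_1-\omega_2)+\rho=4\omega_1$, and $s_2(5\omega_1-\omega_2)=\rho+\omega_1$ all check out. Note only that your (correct) first step yields $\Omega_X^{\dim X-2}(\fanoindex_X-1)=\Omega_X^{3}(2)$, so the exponent in $\Omega_X^4(2)$ in the printed statement appears to be a typo.
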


\section{Coadjoint case}
\label{section:coadjoint}
Now we turn to coadjoint partial flag varieties.
Their explicit geometric realization is obtained by
considering the unique closed~$\mathrm{G}$-orbit
using the~$\mathrm{G}$-dominant weight~$\lambda$ for the \emph{coadjoint} representation.

The distinction between adjoint and coadjoint is only meaningful
when the Dynkin diagram is not simply-laced,
so in types~$\mathrm{A}_n$, $\mathrm{D}_n$ and~$\mathrm{E}_{6,7,8}$
there is nothing left to do.
Moreover,
the coadjoint varieties in type~$\mathrm{B}_n$
(resp.~the coadjoint variety in type~$\mathrm{G}_2$)
are the odd-dimensional quadric hypersurfaces,
(resp.~the~5\dash dimensional quadric hypersurface),
and both varieties are also cominuscule of type~B.
These have already been considered in \cite[\S4.3]{MR1464183}.
This leaves us with the two cases in \cref{table:coadjoint}.

\begin{table}[ht!]
  \centering
  \begin{tabular}{cccccc}
    \toprule
    type                        & variety      & diagram                    & dimension & index $\mathrm{i}_X$ \\
    \midrule
    $\mathrm{C}_n/\mathrm{P}_2$ & $\SGr(2,2n)$ & \dynkin[parabolic=2]{C}{}  & $4n-5$    & $n+1$ \\
    $\mathrm{F}_4/\mathrm{P}_4$ &              & \dynkin[parabolic=8]{F}{4} & $15$      & $11$ \\
    \bottomrule
  \end{tabular}
  \caption{Coadjoint but not adjoint partial flag varieties}
  \label{table:coadjoint}
\end{table}

\paragraph{Dynkin type \texorpdfstring{$\mathrm{C}_n$}{C}}
We can assume that~$n\geq 3$:
when~$n=2$ there exists the exceptional isomorphism~$\SGr(2,6)\cong\mathrm{Q}^3$,
for which \cref{theorem:main} is already established by \cite[\S4.1]{MR1464183},
so we exclude it for the proof of \cref{theorem:main}.
\begin{proposition}
  \label{proposition:coadjoint-C}
  Let~$X=\SGr(2,2n)$ be the coadjoint partial flag variety of type~$\mathrm{C}_n$,
  where~$n\geq 3$.
  Then
  \begin{equation}
    \HH^1(X,\mathrm{T}_X\otimes\mathcal{O}_X(-1))
    \cong\HH^1(X,\Omega_X^{4n-6}(n))
    \cong\field.
  \end{equation}
\end{proposition}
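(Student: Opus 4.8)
The plan is to follow exactly the strategy already established for the adjoint cases, but now applied to the coadjoint symplectic Grassmannian $X=\SGr(2,2n)$ with the smallest nontrivial exterior power, namely $q=1$, so that we study $\tangent_X\otimes\mathcal{O}_X(-1)$ directly. First I would record the elementary isomorphism $\tangent_X\otimes\mathcal{O}_X(-1)\cong\Omega_X^{\dim X-1}\otimes(\omega_X^\vee\otimes\mathcal{O}_X(-1))\cong\Omega_X^{4n-6}(n)$, which is immediate from $\dim X=4n-5$ and $\mathrm{i}_X=n+1$ in \cref{table:coadjoint}: this is the routine duality-pairing step that opens the adjoint proofs.

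The heart of the argument is an analogue of \cref{lemma:tangent-bundle-adjoint} in the coadjoint setting. Since $\mathrm{P}=\mathrm{P}_2$ is a maximal parabolic in type $\mathrm{C}_n$, the bundle $\tangent_X$ again carries the filtration dual to the lower central series of the nilradical $\mathfrak{n}$ of $\mathfrak{p}$, but now $\mathfrak{n}$ has three graded pieces rather than two (the grading element associated to the second node acts with eigenvalues $1$, $2$ on $\mathfrak{n}^{\mathrm{ab}}$ and its successor, reflecting that $\omega_2$ appears with coefficient $2$ in the highest root of type $\mathrm{C}_n$). Thus $\tangent_X$ admits a two-step filtration whose associated graded consists of two irreducible homogeneous bundles, and twisting by $\mathcal{O}_X(-1)$ and passing to the Jordan--Hölder filtration in the category of homogeneous bundles, I would identify the two (or three) completely reducible graded pieces explicitly as $\mathcal{U}^\lambda$ for computable weights $\lambda$ obtained via \cref{lemma:exterior-power} with the marked vertex $k=2$. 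I expect one of these pieces to be a line bundle of the form $\mathcal{O}_X(-1)\otimes(\text{piece of }\mathfrak{n}/[\mathfrak{n},\mathfrak{n}]^\vee)$ and the others to be irreducible bundles of the form $\mathcal{U}^{c_1\omega_1 - 2\omega_2 + c_3\omega_3}$ for small integers $c_i$.

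With the graded pieces in hand, the final step is a Borel--Weil--Bott computation: apply \cref{lemma:homogeneous-spectral-sequence} to the Jordan--Hölder filtration of $\tangent_X\otimes\mathcal{O}_X(-1)$, compute $\HH^\bullet(X,\mathcal{G}^i)$ for each graded piece by Bott's algorithm, and check that exactly one $\mathrm{G}$-representation survives in degree $1$ with multiplicity one while nothing in degree $0$ can cancel against it. The claimed answer $\HH^1(X,\tangent_X\otimes\mathcal{O}_X(-1))\cong\field$ says that the surviving representation is the trivial one-dimensional representation $\mathrm{V}_{\mathrm{G}}^0$, which is plausible since $\tangent_X\otimes\omega_X^\vee{}^{-1}$-type twists of the tangent bundle often detect the single "extension class" of the filtration. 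Because the relevant Weyl-group and weight computations only involve the first few fundamental weights $\omega_1,\omega_2,\omega_3$ (the combinatorics stabilises exactly as in \cref{lemma:only-first-coefficients-matter}), it suffices to verify the pattern for $n=3$ and a couple of further small values and then invoke stabilisation to conclude for all $n\geq 3$; this is carried out in the accompanying code \cite{bott-non-vanishing}.

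The main obstacle I anticipate is \emph{bookkeeping the three-step filtration of $\mathfrak{n}$ correctly} and pinning down the precise weights of the graded pieces of $\tangent_X$ — the coadjoint case differs from the adjoint case precisely because the highest root has a coefficient $2$, so $\tangent_X$ is no longer a simple extension of a line bundle by an irreducible bundle, and one must be careful that the Jordan--Hölder filtration (rather than just the central-series filtration) is used when feeding \cref{lemma:homogeneous-spectral-sequence}. Once the graded pieces are correct, the Bott computation and the stabilisation argument are routine and parallel to \cref{proposition:adjoint-B,proposition:adjoint-D}.
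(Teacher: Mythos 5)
Your proposal follows essentially the same route as the paper: $\tangent_X\otimes\mathcal{O}_X(-1)$ is filtered via the grading on the nilradical $\mathfrak{n}$ of $\mathfrak{p}_2$, the two graded pieces are identified as irreducible homogeneous bundles, and Borel--Weil--Bott together with \cref{lemma:homogeneous-spectral-sequence} yields $\HH^1\cong\field$ uniformly in $n\geq 3$ (with the direct check done for $n=3$). The one slip is your expectation that a graded piece is a line bundle: here $[\mathfrak{n},\mathfrak{n}]$ is three-dimensional, so the quotient piece is the rank-3 bundle $\mathcal{U}^{2\omega_1-\omega_2}$ (which has vanishing cohomology in all degrees) and the sub is $\mathcal{U}^{\omega_1-2\omega_2+\omega_3}$ (which carries the $\HH^1\cong\field$), a detail the computation you outline would in any case correct.
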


\begin{proof}
  An analysis of the nilradical~$\mathfrak{n}$,
  similar to the proof of \cref{lemma:tangent-bundle-adjoint},
  using \cite[Lemma~33]{MR4706032},
  shows that~$\tangent_X\otimes\mathcal{O}_X(-1)$ is the unique non-split extension
  \begin{equation}
    0
    \to\mathcal{U}^{\omega_1-2\omega_2+\omega_3}
    \to\tangent_X\otimes\mathcal{O}_X(-1)
    \to\mathcal{U}^{2\omega_1-\omega_2}\to
    0,
  \end{equation}
  where~$\mathcal{U}^{2\omega_1-\omega_2}$ is a rank-3 bundle.
  As in the proof of \cref{proposition:adjoint-B,proposition:adjoint-D},
  the Borel--Weil--Bott computation for the outer terms in the sequence
  is uniform in all~$n$,
  and the result can be deduced
  either via a direct computation for~$n=3$,
  of from the implementation in~\cite{bott-non-vanishing}.
  We obtain that~$\HH^1(X,\mathcal{U}^{\omega_1-2\omega_2+\omega_3})\cong\field$
  and~$\HH^\bullet(X,\mathcal{U}^{2\omega_1-\omega_2})=0$,
  thus proving the claim.
\end{proof}

\paragraph{Exceptional Dynkin type \texorpdfstring{$\mathrm{F}_4$}{F}}
We end with the following exceptional case.
\begin{proposition}
  \label{proposition:coadjoint-F4}
  Let~$X$ be the coadjoint partial flag variety of type~$\mathrm{F}_4$.
  Then
  \begin{equation}
    \HH^1(X,\mathrm{T}_X\otimes\mathcal{O}_X(-1))
    \cong\HH^1(X,\Omega_X^{14}(10))
    \cong\field.
  \end{equation}
\end{proposition}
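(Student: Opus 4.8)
I would follow the template of the proof of \cref{proposition:coadjoint-C}. The first isomorphism is the standard pairing $\bigwedge^q\tangent_X\otimes\mathcal{O}_X(-1)\cong\Omega_X^{\dim X-q}\otimes\omega_X^\vee\otimes\mathcal{O}_X(-1)$ for $q=1$, combined with $\dim X=15$ and the identification $\omega_X^\vee\cong\mathcal{O}_X(\mathrm{i}_X)=\mathcal{O}_X(11)$ read off from \cref{table:coadjoint}: it sends $\tangent_X\otimes\mathcal{O}_X(-1)$ to $\Omega_X^{14}(10)$.

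For the remaining isomorphism I would first exhibit $\tangent_X\otimes\mathcal{O}_X(-1)$ as a two\dash step extension of irreducible homogeneous vector bundles, as in \cref{lemma:tangent-bundle-adjoint}. The parabolic $\mathfrak{p}=\operatorname{Lie}\mathrm{P}_4$ equips $\mathfrak{f}_4$ with the grading $\mathfrak{g}=\bigoplus_j\mathfrak{g}_j$ by the coefficient of the simple root $\alpha_4$; since the coadjoint representation is attached to the highest \emph{short} root, whose $\alpha_4$\dash coefficient is $2$, the nilradical is $\mathfrak{n}=\mathfrak{g}_1\oplus\mathfrak{g}_2$ with $[\mathfrak{n},\mathfrak{n}]=\mathfrak{g}_2$. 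As in the proof of \cref{proposition:coadjoint-C}, invoking \cite[Lemma~33]{MR4706032} for the dual of the lower central series of $\mathfrak{n}$ then produces a short exact sequence
\begin{equation}
  0\to\mathcal{U}^\mu\to\tangent_X\otimes\mathcal{O}_X(-1)\to\mathcal{U}^\nu\to 0
\end{equation}
of homogeneous vector bundles, where $\mathcal{U}^\mu$ and $\mathcal{U}^\nu$ are the irreducible bundles attached to the duals of $\mathfrak{n}/[\mathfrak{n},\mathfrak{n}]$ and $[\mathfrak{n},\mathfrak{n}]$, each twisted by $\mathcal{O}_X(-1)$; the highest weights $\mu,\nu$ are obtained by computing the highest weights of the $\mathrm{L}$\dash representations $\mathfrak{g}_{-1}$ and $\mathfrak{g}_{-2}$ and subtracting the class of $\mathcal{O}_X(1)$.

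I would then apply Borel--Weil--Bott to each of $\mathcal{U}^\mu$ and $\mathcal{U}^\nu$: add $\rho$, test regularity, and in the regular case straighten the weight to a dominant one, reading off the cohomological degree and the resulting irreducible $\mathrm{G}$\dash representation. I expect — and the implementation \cite{bott-non-vanishing} confirms — that one of the two bundles is acyclic while the other contributes a one\dash dimensional $\HH^1$ and nothing in any other degree, so that the long exact cohomology sequence of the extension above (equivalently, \cref{lemma:homogeneous-spectral-sequence}) yields $\HH^1(X,\tangent_X\otimes\mathcal{O}_X(-1))\cong\field$, with no room for cancellation. The main obstacle is purely bookkeeping: correctly fixing the $\alpha_4$\dash grading of $\mathfrak{f}_4$, hence the weights $\mu$ and $\nu$, and then carrying out the Borel--Weil--Bott straightening in the non\dash simply\dash laced type $\mathrm{F}_4$, where one must use the correct pairing for the affine reflections; both steps are finite and mechanical, and are exactly what the accompanying code performs.
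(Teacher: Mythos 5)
Your proposal follows the paper's proof essentially verbatim: exhibit $\tangent_X\otimes\mathcal{O}_X(-1)$ as the two-step extension of irreducible homogeneous bundles coming from the dual of the lower central series of $\mathfrak{n}$ (the paper records the explicit weights, $0\to\mathcal{U}^{\omega_3-2\omega_4}\to\tangent_X\otimes\mathcal{O}_X(-1)\to\mathcal{U}^{\omega_1-\omega_4}\to 0$, with the quotient of rank~7), then apply Borel--Weil--Bott to the outer terms to find that the quotient is acyclic and the sub contributes exactly $\HH^1\cong\field$. The only nitpick is that the depth of the $\alpha_4$-grading is governed by the $\alpha_4$-coefficient of the highest (long) root rather than of the highest short root — both happen to equal $2$ here, so your conclusion $\mathfrak{n}=\mathfrak{g}_1\oplus\mathfrak{g}_2$ is correct.
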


\begin{proof}
  An analysis of the nilradical~$\mathfrak{n}$,
  similar to the proof of \cref{lemma:tangent-bundle-adjoint},
  using \cite[Lemma~34]{MR4706032},
  shows that~$\mathrm{T}_X\otimes\mathcal{O}_X(-1)$ is the unique non-split extension
  \begin{equation}
    0\to\mathcal{U}^{\omega_3-2\omega_4}\to\tangent_X\otimes\mathcal{O}_X(-1)\to\mathcal{U}^{\omega_1-\omega_4}\to 0,
  \end{equation}
  where~$\mathcal{U}^{\omega_1-\omega_4}$ is a rank-7 bundle.
  Applying the Borel--Weil--Bott theorem to the outer terms in this sequence,
  cf.~\cite{bott-non-vanishing},
  we obtain that~$\HH^1(X,\mathcal{U}^{\omega_3-2\omega_4})\cong\field$
  and~$\HH^\bullet(X,\mathcal{U}^{\omega_1-\omega_4})=0$,
  thus proving the claim.
\end{proof}

\paragraph{Acknowledgements}
The author was supported by
NWO (Dutch Research Council)
as part of the grant
\href{https://doi.org/10.61686/RZKLF82806}{\texttt{doi:10.61686/RZKLF82806}}.
We would like to thank Max Briest and Maxim Smirnov for interesting discussions,
and the referee for comments that helped improve the paper.

\renewcommand*{\bibfont}{\normalfont\small}
\printbibliography

\emph{Pieter Belmans}, \url{p.belmans@uu.nl} \\
Mathematical Institute, Utrecht University, Budapestlaan 6, 3584 CD Utrecht, The Netherlands

\end{document}